\newtheorem{defi}{Definition}
\newtheorem{tm}{Theorem}
\newtheorem{lema}{Lemma}
\newtheorem{kor}{Corollary}
\theoremstyle{remark}
\newtheorem{remark}{Remark}
\newtheorem{ex}[remark]{Example}
\begin{document}

%\begin{titlepage}

\vspace*{0.2cm}

\begin{center}
{\Large\bf LCD subspace codes}       
\end{center}

\vspace*{0.2cm}

\begin{center}
Dean Crnkovi\'c \\
({\it\small E-mail: deanc@math.uniri.hr})\\[3pt] 
Andrea \v Svob \footnote{Corresponding author}\\
({\it\small E-mail: asvob@math.uniri.hr})\\[3pt]
{\it\small Faculty of Mathematics, University of Rijeka} \\
{\it\small Radmile Matej\v ci\'c 2, 51000 Rijeka, Croatia}\\
\end{center}

\vspace*{0.5cm}

\begin{abstract}
A subspace code is a nonempty set of subspaces of a vector space $\mathbb F^n_q$. Linear codes with complementary duals, or LCD codes, are linear codes whose intersection with their duals is trivial. In this paper, we introduce a notion of LCD subspace codes. 
We show that the minimum distance decoding problem for an LCD subspace code reduces to a problem that is simpler than for a general subspace code.
Further, we show that under some conditions equitable partitions of association schemes yield such LCD subspace codes and as an illustration of the method give some examples from distance-regular graphs.
We also give a construction from mutually unbiased Hadamard matrices, and more generally, from mutually unbiased weighing matrices.
\end{abstract}

\vspace*{0.5cm}

{\bf Keywords:} association scheme, equitable partition, mutually unbiased Hadamard matrices, LCD code, subspace code.  

{\bf Mathematical subject classification (2020):} 05E30, 05B20, 94B05.

%\end{titlepage}

\section{Introduction}\label{intro_new}

A network is a directed multigraph which consists of different vertices.
The source vertices transmit messages to the sink
vertices through a channel of inner vertices. The idea of network coding is to allow mixing of data at these intermediate network vertices. A receiver sees the data packets and deduces from them the messages that were originally intended for the sinks. 
When sending information through a network, we can do linear combinations
on the intermediate nodes and then call this network linear network coding. 

In \cite{network-coding}, R. K\"{o}tter and F. Kschischang opened a new area of research in the coding theory and named it the theory of random  network  coding. It was developed with an idea of the transmission of information in networks. In classical coding theory vectors are sent as codewords and here, in random network coding the codewords are subspaces. This is the reason why these types of codes are called subspace codes. R. K\"{o}tter and F. Kschischang proved (see \cite{network-coding}) that subspace codes are efficient for transmission in networks and because of their applications in error correction for random network coding they attract a wide attention in recent research. See for example \cite{daniele, leo-constant, zhang}.

Linear codes with complementary duals, or LCD codes, are linear codes whose intersection with their duals is trivial. This type of linear codes were introduced by Massey in \cite{massey} and have been widely applied in information protection, electronics and cryptography. It is known that they provide an optimum linear coding solution for the two user binary adder channel and they can be used to protect systems against side channel attacks (SCA) and fault injection attacks (FIA). See for example \cite{multi, carlet-guilley}. LCD codes are asymptotically good; it is shown in \cite{sendrier} that they meet the asymptotic Gilbert-Varshamov bound. Nowadays, many research topics are dealing with LCD codes.  See for example \cite{araya-har, stefka, dra-lcd}.

%Self-orthogonal subspace codes are introduced in \cite{dsa-as}, and a construction from association schemes is also given. 
In this paper, we are interested in LCD subspace codes. We introduce a notion of LCD subspace codes and give a construction of such a codes via equitable partitions of association schemes. Further, we give examples from distance-regular graphs. We also show that the minimum distance decoding problem for an LCD subspace code reduces to a problem that is simpler than for a general subspace code.

The paper is outlined as follows. In the next section, we provide the relevant background information. In Section \ref{LCD-subspace}, we introduce a notion of LCD subspace codes and in Section 
\ref{constr-LCD}, we give a method for obtaining such subspace codes from association schemes and give examples of the construction from distance-regular graphs. 
In Section \ref{MUH}, we give a construction of LCD subspace codes from mutually unbiased Hadamard matrices, and more generally, from mutually unbiased weighing matrices.
In this work, we have used computer algebra systems GAP \cite{GAP2016} and Magma \cite{magma}. 

\section{Preliminaries} \label{intro}

We assume that the reader is familiar with the basic facts of theory of distance-regular graphs and association schemes (for background reading in theory of distance-regular graphs and association schemes we refer the reader to \cite{BCN}). We also assume a basic knowledge of coding theory (see \cite{FEC}). For recent results on subspace codes we refer to \cite{network-book, heinlein-dcc}. 
Throughout, we let $\mathbb{F}_{q}$ denote the finite field of order $q$ for some prime power $q$.

A $q$-ary \emph{linear code} $C$ of dimension $k$ for a prime power $q$, is a $k$-dimensional subspace of a vector space $\mathbb{F}_{q}^n$ over $\mathbb{F}_{q}$.  Elements of $C$ are called codewords.
%Let $x=(x_1,\dots,x_n)$ and $y=(y_1,\dots,y_n)\in \mathbb{F}_q^n$. 
The \emph{Hamming distance} between words (elements of the ambient space $\mathbb{F}_{q}^n$) $x=(x_1,\dots,x_n)$ and $y=(y_1,\dots,y_n)\in \mathbb{F}_q^n$, 
is the number \ $d(x,y)=\left| \{ i  :  x_i \neq y_i \} \right| $.
The \emph{minimum distance} of the code  $C$ is defined by \ $d=\mbox{min}\{ d(x,y):  x,y\in C, \ x\neq y\}$.  The \emph{weight} of a codeword $x$ is \ $w(x)=d(x,0)=|\{i  :  x_i\neq 0 \}|$. For a linear code, $d=\mbox{min}\{ w(x)  :  x \in C, x\neq0 \}.$ A  $q$-ary linear code of length $n$, dimension $k$, and distance $d$ is called an $[n,k,d]_q$ code. 

The \emph{dual} code $C^\perp$ of a code $C$ is the orthogonal complement of $C$ under the Euclidean inner product
$\langle\cdot \, ,\cdot\rangle$, i.e.\ $C^{\perp} = \{ v \in \mathbb{F}_{q}^n| \langle v,c\rangle=0 {\rm \ for\ all\ } c \in C \}$. A linear code $C$ over $\mathbb F_q$ is called an (Euclidean) LCD code if $C\cap C^{\perp}=\{0\}$. It follows easily that the dual of an LCD code is an LCD code.
%A code $C$ is \emph{self-orthogonal} if $C \subseteq C^\perp$ and \emph{self-dual} if equality is attained.
%A code is \emph{formally self-dual} if $C$ and $C^{\perp}$ have identical weight distributions.  

A {\it subspace code} $C_S$ is a nonempty set of subspaces of $\mathbb F^n_q$. 
For the parameters of a subspace code we will follow the notation from \cite{table_sc}
and use a {\it subspace distance} given by
\begin{equation} \label{submet}
d_s(U,W)=\dim(U+W)-\dim(U\cap W),
\end{equation}
where $U, W \in C_S$. The {\it minimum distance} of $C_S$ is given by 
$$d=min \{d_S(U,W) | \ U,W \in C_S, U \neq W \}.$$
A code $C_S$ is called an $(n,\#C_S,d;K)_q$ subspace code if the dimensions of the codewords of $C_S$ are contained in a set $K\subseteq \{0,1,2, \dots, n\}$. 
In the case $K=\{k\}$, a subspace code $C_S$ is called a {\it constant dimension code} with the parameters $(n,\#C_S,d;k)_q$, otherwise, \textit{i.e.} if all codewords do not have the same dimension, 
$C_S$ is called a {\it mixed dimension code}. Such a subspace code is denoted by $(n,\#C_S,d)_q$. Constant dimension codes are the most studied subspace codes as they are $q$-analogues of the constant-weight codes. 

We will follow the definition of an association scheme given in \cite{BCN}, although some authors use a term a {\it symmetric association scheme} for such structures.

Let $X$ be a finite set. An {\it association scheme} with $d$ classes is a pair $(X,\mathcal{R})$ such that
\begin{enumerate}
 \item $\mathcal{R}= \{ R_0,R_1,\dots,R_d\}$ is a partition of $X \times X$,
 \item $R_0=\bigtriangleup = \{ (x,x) |x \in X \} $,
 \item $R_i=R_i^{\top}$ (\textit{i.e.} $(x,y) \in R_i \Rightarrow (y,x) \in R_i)$ for all $i \in \{ 0,1,\dots, d \} $,
 \item there are numbers $p_{ij}^k$ (the intersection numbers of the scheme) such that for any pair $(x,y) \in R_k$ the number of $z \in X$ such that $(x,z) \in R_i$ and $(z,y) \in R_j$ equals $p_{ij}^k$. 
\end{enumerate}

The relations $R_i$, $i \in \{ 0,1, \dots, d\}$, of an association scheme can be described by the set of symmetric $(0,1)$-adjacency matrices $\mathcal{A}= \{A_0, A_1,\dots, A_d \}$, $A_i=[a^i_{x,y}]$ 
for $i=0,1, \ldots, d$, where $a^i_{xy}=1$ if $(x,y) \in R_i$, which generate $(d+1)$-dimensional commutative and associative algebra over real or complex numbers called {\it the Bose-Mesner algebra} of the scheme. The matrices $\{A_0, A_1, \dots, A_d \}$ satisfy
\begin{equation} \label{form1}
 A_i A_j=\sum_{k=0}^d p_{i,j}^k A_k=A_j A_i.
\end{equation}
Each of the matrices $A_i$, $i \in \{1,2, \dots, d\}$, represents a simple graph $\Gamma_i$ on the set of vertices $X$ (if $(x,y)\in R_i$ then vertices $x$ and $y$ are adjacent in $\Gamma_i$). 
%and the graphs $\Gamma_i$ form an edge-coloring of the complete graph on $X$.

\section{LCD subspace codes }\label{LCD-subspace}

As an analog of the definition of an LCD linear code we introduce the definition of an LCD subspace code as follows.

\begin{defi}
Let ${\cal P}_q(n)$ be the set of all subspaces of $\mathbb F_q^n$, and let $C_S \subseteq  {\cal P}_q(n)$ be a subspace code. 
%The {\it dual} code of a subspace code $C_S \subseteq  {\cal P}_q(n)$ is the set $C_S^{\perp}$ of all vector spaces in ${\cal P}_q(n)$ that are orthogonal to each vector space in $C_S$.
If $C_i \cap C_j^\perp=\{0\}$, for all $C_i,C_j \in C_S$, then $C_S$ is called an {\it LCD subspace code}. 
\end{defi}

Each codeword of an LCD subspace code $C_S$ is an LCD code, since for every $C_i \in C_S$ it holds that $C_i \cap C_i^\perp=\{0\}$.

The following lemma characterizes LCD codes, as shown in \cite{massey}.

\begin{lema}[Proposition 1, \cite{massey}]\label{LCD-herm}
Let $G$ be a generator matrix of a linear code over a finite field. Then $\mathrm{det}(GG^{\top})\neq 0$ if and only if $G$ generates an LCD code.
\end{lema}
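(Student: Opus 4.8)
The plan is to translate the statement into a purely linear-algebraic condition on the $k\times k$ matrix $GG^\top$, exploiting that a generator matrix has full row rank. Write $C$ for the $[n,k]_q$ code generated by $G$, so that $G$ is a $k\times n$ matrix of rank $k$ whose rows span $C$. Every codeword of $C$ then has the form $xG$ for a unique $x\in\mathbb F_q^k$, the uniqueness following from $\mathrm{rank}(G)=k$, which makes the map $x\mapsto xG$ injective. This injectivity is the feature I would use repeatedly, and it is precisely what lets the argument go through over an arbitrary finite field, where the Euclidean form is not definite.

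Next I would characterize membership in $C\cap C^\perp$. By definition $C^\perp=\{v\in\mathbb F_q^n : \langle v,c\rangle=0 \text{ for all } c\in C\}$, and since the rows of $G$ span $C$, a vector $v$ lies in $C^\perp$ if and only if $vG^\top=0$. Applying this to a codeword $v=xG$ gives $xG\in C^\perp$ if and only if $x\,GG^\top=0$, so that
\begin{equation*}
C\cap C^\perp=\{\,xG : x\in\mathbb F_q^k,\ x\,GG^\top=0\,\}.
\end{equation*}

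Finally I would combine the two observations. Because $x\mapsto xG$ is injective, $C\cap C^\perp=\{0\}$ holds exactly when the only solution of $x\,GG^\top=0$ is $x=0$, that is, when $GG^\top$ has trivial left null space, equivalently when it is nonsingular, equivalently when $\det(GG^\top)\neq 0$. This establishes both implications at once, so $G$ generates an LCD code if and only if $\det(GG^\top)\neq 0$.

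The main point to be careful about is that, unlike over $\mathbb R$, the Gram matrix $GG^\top$ need not be positive semidefinite and a nonzero codeword may be self-orthogonal; hence no analytic or definiteness argument is available, and the proof must rest solely on the rank/injectivity bookkeeping above together with the fact that $C^\perp$ is cut out by the single matrix equation $vG^\top=0$. I would also remark, to confirm that the criterion is intrinsic to $C$, that replacing $G$ by $SG$ for an invertible $S$ multiplies $\det(GG^\top)$ by $(\det S)^2$, so the nonvanishing condition is a property of the code rather than of the chosen generator matrix.
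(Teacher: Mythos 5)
Your proof is correct and complete: the reduction of membership in $C\cap C^{\perp}$ to the equation $x\,GG^{\top}=0$, combined with the injectivity of $x\mapsto xG$ coming from the full row rank of $G$, gives both implications at once. Note that the paper itself offers no proof of this lemma --- it is quoted with a citation to Massey's paper, where it appears as Proposition 1 --- so there is nothing in-paper to compare against; your argument is essentially the standard (Massey's original) one, and your closing observation that $\det\bigl((SG)(SG)^{\top}\bigr)=(\det S)^2\det(GG^{\top})$ is a worthwhile sanity check that the nonvanishing criterion depends only on the code, not on the chosen generator matrix.
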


For a construction of LCD subspace codes we will use the following property.

\begin{lema}\label{LCD-lemma}
Let $C_1$ and $C_2$ be $[n,k]$ codes over the field $\mathbb F_q$, and let $G_1$ and $G_2$ be their generator matrices, respectively. If the matrix $G_2G_1^{\top}$ is nonsingular,
then $C_2 \cap C_1^{\perp}=C_1 \cap C_2^{\perp}= \{0\}$.
\end{lema}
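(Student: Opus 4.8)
The plan is to argue directly with the generator matrices, treating all codewords as row vectors that arise as $\mathbb{F}_q$-linear combinations of the rows of a generator matrix. First I would record the basic reformulation of the two conditions appearing in the statement. Since the rows of $G_1$ span $C_1$, a vector $v \in \mathbb{F}_q^n$ lies in $C_1^{\perp}$ precisely when it is orthogonal to every row of $G_1$, that is, exactly when $v G_1^{\top} = 0$. On the other hand, membership $v \in C_2$ means $v = x G_2$ for some coordinate vector $x \in \mathbb{F}_q^k$. These two translations turn the intersection condition into a single matrix equation.

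The core step is then immediate. Take any $v \in C_2 \cap C_1^{\perp}$. Writing $v = x G_2$ and substituting into $v G_1^{\top} = 0$ gives $x\,(G_2 G_1^{\top}) = 0$. By hypothesis the $k \times k$ matrix $G_2 G_1^{\top}$ is nonsingular, so $x = 0$, and therefore $v = x G_2 = 0$. This establishes $C_2 \cap C_1^{\perp} = \{0\}$. Note that only the nonsingularity of $G_2 G_1^{\top}$ is used here; one does not even need to invoke that $G_2$ has full rank separately.

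For the symmetric half I would exploit that the hypothesis is insensitive to transposition: $G_1 G_2^{\top} = (G_2 G_1^{\top})^{\top}$, so $\det(G_1 G_2^{\top}) = \det(G_2 G_1^{\top}) \neq 0$, whence $G_1 G_2^{\top}$ is nonsingular as well. Repeating the previous argument with the roles of $C_1$ and $C_2$ interchanged then yields $C_1 \cap C_2^{\perp} = \{0\}$. The whole proof is routine linear algebra over $\mathbb{F}_q$; the only points requiring a little care are the transpose bookkeeping in identifying $C_1^{\perp}$ with $\ker$ of $(\,\cdot\,)G_1^{\top}$, and the observation that nonsingularity passes to the transpose, which is exactly what makes the conclusion symmetric in the two codes. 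Specializing to $C_1 = C_2$ recovers the classical criterion of Lemma~\ref{LCD-herm}.
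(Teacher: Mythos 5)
Your proof is correct and follows essentially the same route as the paper's: write $v \in C_2 \cap C_1^{\perp}$ as $v = xG_2$ with $vG_1^{\top}=0$, deduce $x(G_2G_1^{\top})=0$ and hence $v=0$ by nonsingularity, then handle $C_1 \cap C_2^{\perp}$ by noting $G_1G_2^{\top}=(G_2G_1^{\top})^{\top}$ is also nonsingular. The only additions are minor expository remarks (the kernel characterization of the dual and the determinant argument for the transpose), which the paper leaves implicit.
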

\begin{proof}
Suppose $v \in C_2$. Then there exists $u \in \mathbb F_q^{k}$ such that $v=uG_2$. If $v \in C_1^{\perp}$, then $vG_1^{\top}=0$, i.e
$uG_2G_1^{\top}=0$. Since $G_2G_1^{\top}$ is nonsingular, $u=0$ and therefore $v=0$. Hence, $C_2 \cap C_1^{\perp}= \{0\}$.

On the other hand, if $G_2G_1^{\top}$ is nonsingular then $(G_2G_1^{\top})^{\top}=G_1G_2^{\top}$ is also nonsingular. Therefore, $C_1 \cap C_2^{\perp}= \{0\}$.
\end{proof}

\bigskip

In \cite{massey}, Massey gave a decoding method for LCD codes. Below we propose a decoding algorithm for LCD subspace codes. In that purpose, we need the following.

%Let $X, Y \subseteq V$ such that $Y=\left\langle y\right\rangle$ i.e. $Y$ is the one dimensional subspace of the vector space $V$ generated by $y$ and $X \cap X^\perp=\{0\}$. 
Let $X \subseteq V$ and $y\in V$ such that $X \cap X^\perp=\{0\}$. 
Then it is easy to see that 
\begin{equation*} \label{dim}
\dim(X+\left\langle y\right\rangle)=\dim(X)+\dim(\pi_{X^\perp}(\left\langle y\right\rangle)), 
\end{equation*}
where $\pi_{X^\perp}(\left\langle y\right\rangle)$ is the projection on the space $X^\perp$. In general, if $Y=\left\langle y_1,\dots,y_k\right\rangle$ then $\pi_{X^\perp}(\left\langle y_1,\dots,y_k\right\rangle)=\left\langle \pi_{X^\perp}(y_1), \dots, \pi_{X^\perp}(y_k ) \right\rangle.$ Thus we can conclude that if $X, Y \in V$ and $X \cap X^\perp=\{0\}$, then
\begin{equation*} \label{dim1}
\dim(X+Y)=\dim(X)+\dim(\pi_{X^\perp}(Y)). 
\end{equation*}

That gives us the following lemma:

\begin{lema}
Let $\{C_1, \dots C_m\}$ be an LCD subspace code and let $C \leq \mathbb F^n_q$. Then,  
\begin{equation} \label{dim2}
d_s(C_i,C)=\dim(C_i)+2\dim(\pi_{C_i^\perp}(C))-\dim(C). 
\end{equation}
\end{lema}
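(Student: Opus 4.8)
The plan is to reduce everything to two ingredients that are already available: the projection identity established in the paragraph immediately preceding the lemma, and the standard Grassmann dimension formula for the sum of two subspaces. The essential observation that unlocks the projection identity is that each codeword of an LCD subspace code is itself an LCD code, i.e.\ $C_i \cap C_i^\perp = \{0\}$, as remarked right after the definition. This is exactly the hypothesis $X \cap X^\perp = \{0\}$ needed to apply the projection formula with $X = C_i$ and $Y = C$.

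First I would unfold the definition of the subspace distance, writing
\[
d_s(C_i,C)=\dim(C_i+C)-\dim(C_i\cap C).
\]
Next, since $C_i\cap C_i^\perp=\{0\}$, the projection identity established above applies and gives
\[
\dim(C_i+C)=\dim(C_i)+\dim(\pi_{C_i^\perp}(C)).
\]
This disposes of the first summand in $d_s$. The remaining task is to control the intersection term $\dim(C_i\cap C)$.

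For that I would invoke the Grassmann formula $\dim(C_i+C)=\dim(C_i)+\dim(C)-\dim(C_i\cap C)$, solve it for the intersection dimension, and substitute the projection expression for $\dim(C_i+C)$. This yields
\[
\dim(C_i\cap C)=\dim(C)-\dim(\pi_{C_i^\perp}(C)),
\]
where the two copies of $\dim(C_i)$ cancel. Feeding both expressions back into the formula for $d_s(C_i,C)$ then produces the claimed identity after collecting the two $\dim(\pi_{C_i^\perp}(C))$ contributions into the coefficient $2$.

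I do not expect any genuine obstacle here: the proof is a two-line substitution once the right facts are in place. The only point that requires a moment's care is recognizing that the LCD hypothesis is used solely through the single codeword $C_i$ (we only need $C_i\cap C_i^\perp=\{0\}$, not the full mutual-complementarity of the code), and that $C$ itself is an arbitrary subspace of $\mathbb F_q^n$ rather than a codeword, which is precisely what makes the formula useful for the decoding algorithm that follows.
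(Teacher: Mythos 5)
Your proposal is correct and follows essentially the same route as the paper's own proof: both combine the Grassmann formula $\dim(C_i+C)=\dim(C_i)+\dim(C)-\dim(C_i\cap C)$ with the projection identity $\dim(C_i+C)=\dim(C_i)+\dim(\pi_{C_i^\perp}(C))$ (valid because $C_i\cap C_i^\perp=\{0\}$), solve for $\dim(C_i\cap C)$, and substitute into the definition of $d_s$. Your added remark that the LCD hypothesis enters only through the single codeword $C_i$ is accurate, though it is left implicit in the paper.
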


\begin{proof}
Since
$$\dim(C_i+C)=\dim(C_i)+\dim(C)-\dim(C_i\cap C)$$ and 
$$\dim(C_i+C)=\dim(C_i)+\dim(\pi_{C_i^\perp}(C)),$$ the following formula holds
$$\dim(C_i\cap C)=\dim(C)-\dim(\pi_{C_i^\perp}(C)).$$
Since $$d_s(C_i,C)=\dim(C_i+C)-\dim(C_i\cap C),$$ the formula (\ref{dim2}) holds. 

\end{proof}

\begin{remark}
Let $C_S=\{C_1, \dots C_m\}$ be an LCD subspace code. Suppose that the codeword $C_i= \langle x_1^i,x_2^i, \ldots , x_k^i \rangle$ is sent through a noisy channel, and the subspace $C= \langle x_1,x_2, \ldots , x_k \rangle$ is received. Based on equation (\ref{dim2}), one can calculate the distance between $C$ and $C_i \in C_S$ in the following way
\begin{equation} \label{dim3}
d_s(C_i,C)=\dim(C_i)+2\dim(\langle \pi_{C_i^\perp}(x_1), \pi_{C_i^\perp}(x_2), \ldots, \pi_{C_i^\perp}(x_k)  \rangle)-\dim(C). 
\end{equation}
A comparison of the equations (\ref{dim3}) and (\ref{submet}) suggests that the minimum distance decoding problem for an LCD subspace code can be simpler than that for a general subspace code, which is similar to the case of classical LCD codes \cite{massey}. Note that a minimum distance decoder for subspace codes chooses the closest codeword to the received word with respect to the subspace distance. If there is more than one closest codeword, the decoder returns "failure".
\end{remark}

\section{LCD subspace codes from association schemes}\label{constr-LCD}
 
Let us consider a square $n \times n$ real matrix $A$ whose rows and columns are indexed by elements of $X = \{1, 2, \ldots , n \}$. Let $\Pi = \{X_1, X_2, \ldots, X_t \}$ be a partition of $X$. 
The characteristic matrix $H$ corresponding to $\Pi$ is the $n \times t$ matrix whose $j$th column is the characteristic vector of $X_j$, where $j= 1,\dots,t$.
We partition the matrix $A$ according to $\Pi$ as $A=[A_{ij}]$, $1 \le i,j \le t$. If $q_{ij}$ denotes the average row sum of $A_{ij}$ then the matrix 
$Q = [q_{ij}]$ is called a \emph{quotient matrix} of $A$. If the row sum of each block $A_{ij}$ is a constant then the partition $\Pi$ is called \emph{row equitable}. 
Similarly, if the column sum of each block $A_{ij}$ is a constant then the partition $\Pi$ is called \emph{column equitable}. If $\Pi$ is both row and column equitable, then $\Pi$ is said to be
\emph{equitable}. If $A$ is an adjacency matrix of a graph $\Gamma$ and $\Pi$ is an equitable partition of $A$, then we say that $\Pi$ is an equitable partition of the graph $\Gamma$.
An equitable (or regular) partition of an association scheme $(X,\mathcal{R})$ with $d$ classes is a partition of $X$ which is equitable with respect to each adjacency matrix of the graphs 
$\Gamma_i$, $i \in \{1,2, \dots,d \}$, corresponding to the association scheme $(X,\mathcal{R})$.

Let $\Pi$ be an equitable partition of an association scheme $(X,\mathcal{R})$ with $t$ cells, and let $H$ be the characteristic matrix of the partition $\Pi$.
Further, let $A_i$ be the adjacency matrix corresponding to a relation $R_i$.
Then the following holds:
\begin{equation} \label{form2}
 A_iH=HM_i,
\end{equation}
where $M_i$ denote the corresponding $t \times t$ quotient matrix of $A_i$ with respect to $\Pi$.
The matrix $H^{\top} H$ is diagonal and invertible and, therefore,
\begin{equation} \label{form2a}
M_i=(H^{\top} H)^{-1} H^{\top} A_iH.
\end{equation}

We will use the next result that was given in \cite{dsa-as}.

\begin{tm} \label{th_struc_const}
Let $\Pi$ be an equitable partition of a $d$-class association scheme $(X,\mathcal{R})$ with $t$ cells, 
and let $M_i$, $i=0, 1, \dots, d$, denote the quotient matrix of the graph $\Gamma_i$ with respect to $\Pi$. Then
\begin{equation} \label{form4}  
 M_i M_j=\sum_{k=0}^d p_{i,j}^k M_k=M_j M_i,
\end{equation}
where integers $p_{ij}^k$ are the intersection numbers of the scheme.
\end{tm}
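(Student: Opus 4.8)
The plan is to transport the multiplicative relation (\ref{form1}) satisfied by the adjacency matrices $A_i$ of the Bose--Mesner algebra down to the quotient matrices $M_i$, using the intertwining relation (\ref{form2}) as the bridge. The crucial observation is that $A_i H = H M_i$ lets one commute each $A_i$ past the characteristic matrix $H$ at the cost of replacing $A_i$ by $M_i$; iterating this and comparing with the expansion of a product of two adjacency matrices should produce the claimed identity for the $M_i$.

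First I would compute the product $A_i A_j H$ in two different ways. On the one hand, applying (\ref{form2}) twice gives
$$A_i A_j H = A_i (A_j H) = A_i H M_j = H M_i M_j.$$
On the other hand, using the Bose--Mesner relation (\ref{form1}) first and then (\ref{form2}) gives
$$A_i A_j H = \Big(\sum_{k=0}^d p_{ij}^k A_k\Big) H = \sum_{k=0}^d p_{ij}^k A_k H = H \sum_{k=0}^d p_{ij}^k M_k.$$
Equating the two expressions yields $H M_i M_j = H \sum_{k=0}^d p_{ij}^k M_k$.

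The final step, and the only point requiring care, is cancelling $H$ from the left. Since the columns of $H$ are the characteristic vectors of the nonempty cells of $\Pi$, they are linearly independent, so $H$ has full column rank and $H^{\top} H$ is the invertible diagonal matrix of cell sizes, as already noted in the text preceding (\ref{form2a}). Multiplying the identity on the left by $(H^{\top} H)^{-1} H^{\top}$ and using $(H^{\top} H)^{-1} H^{\top} H = I$ therefore gives $M_i M_j = \sum_{k=0}^d p_{ij}^k M_k$.

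To obtain the commutativity statement, I would note that the scheme is symmetric, so $p_{ij}^k = p_{ji}^k$; hence exchanging the roles of $i$ and $j$ in the computation above (equivalently, starting from $A_j A_i = A_i A_j$ in (\ref{form1})) gives $M_j M_i = \sum_{k=0}^d p_{ij}^k M_k$ as well, which establishes (\ref{form4}). I do not expect any genuine obstacle here: the entire content of the argument is the left-cancellation of $H$, which is legitimate precisely because $H$ has full column rank, so that $(H^{\top}H)^{-1}H^{\top}$ is a left inverse of $H$.
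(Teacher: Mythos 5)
Your proof is correct. Note that the paper itself gives no proof of Theorem \ref{th_struc_const}: it imports the result from \cite{dsa-as}, so there is no internal argument to compare against. Your derivation --- computing $A_iA_jH$ in two ways using the intertwining relation (\ref{form2}) and the Bose--Mesner relation (\ref{form1}), then cancelling $H$ on the left via the left inverse $(H^{\top}H)^{-1}H^{\top}$ already noted in (\ref{form2a}) --- is the standard one and is sound; the cancellation is legitimate exactly for the reason you state, namely that the characteristic vectors of the nonempty, pairwise disjoint cells are linearly independent, so $H$ has full column rank. The commutativity step is also fine: since the scheme is symmetric, (\ref{form1}) gives $A_iA_j=A_jA_i=\sum_{k=0}^d p_{ij}^k A_k$, so repeating the computation with $i$ and $j$ exchanged yields $M_jM_i=\sum_{k=0}^d p_{ij}^k M_k$ as well.
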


The following theorem is given in \cite{as}. Note that $I_{t}$ denotes the identity matrix of order $t$.

\begin{tm} \label{lcd-code}
Let $\Pi$ be an equitable partition of a $d$-class association scheme $(X,\mathcal{R})$ with $t$ cells of the same length $\frac{|X|}{t}$. 
Further, let $M_i$ denotes the corresponding quotient matrix of $A_i$ with respect to $\Pi$. If there exists $i \in \{1,2,\dots,d\}$ and a prime $p$ such that for all $k \in \{0,1,\dots,d\}$ the prime $p$ divides 
$p_{i,i}^k$, then the matrix
$N=\left[ \begin{array}{c|c}
M_i & \alpha I_t
\end{array} \right]$
generates a $[2t,t]_{q}$ LCD code for every $\alpha \in \mathbb F_{q} \setminus \{ 0 \}$, for some positive integer $r$ after $q = p^r$.
\end{tm}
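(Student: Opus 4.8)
The plan is to apply Massey's characterization of Lemma~\ref{LCD-herm}: the matrix $N=[\,M_i \mid \alpha I_t\,]$ generates an LCD code over $\mathbb{F}_q$ precisely when $\det(NN^{\top})\neq 0$. Hence the whole argument reduces to computing $NN^{\top}$ and showing that its determinant is nonzero once we pass to characteristic $p$. Since a block multiplication immediately gives $NN^{\top}=M_iM_i^{\top}+\alpha^2 I_t$, everything hinges on understanding $M_iM_i^{\top}$.

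First I would exploit the equal-cell-size hypothesis. Because every cell has length $\frac{|X|}{t}$, the characteristic matrix satisfies $H^{\top}H=\frac{|X|}{t}\,I_t$, so formula (\ref{form2a}) simplifies to $M_i=\frac{t}{|X|}\,H^{\top}A_iH$. As each adjacency matrix $A_i$ is symmetric, this expression is symmetric, so every quotient matrix $M_i$ is symmetric; in particular $M_iM_i^{\top}=M_i^2$. Taking $A_0=I_n$ in the same formula also gives $M_0=I_t$. Therefore
$$NN^{\top}=M_iM_i^{\top}+\alpha^2 I_t=M_i^2+\alpha^2 I_t.$$

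Next I would invoke Theorem~\ref{th_struc_const} with $j=i$, that is $M_i^2=\sum_{k=0}^d p_{i,i}^k M_k$, and separate the $k=0$ term using $M_0=I_t$:
$$NN^{\top}=(p_{i,i}^0+\alpha^2)\,I_t+\sum_{k=1}^d p_{i,i}^k M_k.$$
Here I would note that the entries of each $M_k$ are constant block row sums, hence integers, so reduction modulo $p$ is legitimate. The hypothesis that $p\mid p_{i,i}^k$ for every $k\in\{0,1,\dots,d\}$ then annihilates each summand together with the scalar $p_{i,i}^0$, leaving $NN^{\top}=\alpha^2 I_t$ over $\mathbb{F}_q$ with $q=p^r$. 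Consequently $\det(NN^{\top})=\alpha^{2t}\neq 0$ for every $\alpha\in\mathbb{F}_q\setminus\{0\}$, so by Lemma~\ref{LCD-herm} the code is LCD; and since the right-hand block $\alpha I_t$ is invertible, $N$ has rank $t$ and generates a $[2t,t]_q$ code.

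The step I expect to carry the real content is establishing the symmetry of $M_i$, since this is exactly where the assumption that all cells have the same length is used: it forces $H^{\top}H$ to be a scalar matrix, which is what makes $M_iM_i^{\top}=M_i^2$ and thereby allows the intersection-number identity (\ref{form4}) of Theorem~\ref{th_struc_const} to feed directly into $NN^{\top}$. Everything after that is a routine reduction modulo $p$ and an evaluation of the determinant of a scalar matrix.
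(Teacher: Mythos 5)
Your proof is correct and takes essentially the same route as the paper: the identical computation appears in the paper's discussion preceding Theorem~\ref{lcd_subcode}, where the symmetry of the quotient matrices $M_i$ (forced by the equal cell sizes and the symmetry of the $A_i$), the structure-constant identity $M_iM_i=\sum_{k=0}^d p_{i,i}^k M_k$ of Theorem~\ref{th_struc_const}, and reduction modulo $p$ combine to give $NN^{\top}=\alpha^2 I_t$ over $\mathbb{F}_q$, after which Massey's criterion (Lemma~\ref{LCD-herm}) yields the LCD property and the invertible block $\alpha I_t$ gives rank $t$.
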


Let $\mathcal{A}= \{A_0, A_1, \dots, A_d \}$ be the set of adjacency matrices of the association scheme $(X,\mathcal{R})$, and let $q=p^r$ be a prime power.
Further, let $\Pi$ be an equitable partition of a $d$-class association scheme $(X,\mathcal{R})$ with $t$ cells of the same length $\frac{|X|}{t}$
and let $M_i$ denote the corresponding quotient matrix of $A_i$ with respect to $\Pi$.
Let us consider the matrices  
$N_i=\left[ \begin{array}{c|c}
M_i & \alpha_i I_t
\end{array} \right]$,
$i \in I=\{i_1, i_2, \dots, i_s\} \subseteq \{0, 1, \dots, d \}$, $s \geq 2$, and $\alpha_i \in \mathbb F_{q} \setminus \{ 0 \}$ for all $i \in I$.
Since the partition $\Pi$ is equitable  
having all cells of the same size, and the matrices $A_i$, $i=0, \ldots , d$, are symmetric, the matrices $M_i$, $i=0, \ldots , d$, are symmetric too.
If $p|p_{x,y}^k$, for all $k \in \{0,1,\dots,d\}$, where $x, y \in I$, then
$$M_xM_y^\top=M_i M_i=\sum_{k=0}^d p_{x,y}^k M_k=0,$$
and 
$$N_xN_y^\top=\alpha_x \alpha_y I_t.$$
Note that the above matrix equations are over $\mathbb F_q$. Especially, the matrix $N_xN_y^\top$ is nonsingular.

Let us now consider the matrix algebra $\overline {\cal M}$ over $\mathbb F_q$ generated by matrices $M_i$, $i \in I$, where $p|p_{x,y}^k$, for all $k \in \{0,1,\dots,d\}$, and all of $x, y \in I$. 
Then for nonzero matrices $X,Y \in \overline {\cal M}$ it holds that  
$\left[ \begin{array}{c|c}
X & \alpha_x I_t
\end{array} \right]
\left[ \begin{array}{c|c}
Y & \alpha_y I_t
\end{array} \right]^\top$is nonsingular, where $\alpha_x, \alpha_y \in \mathbb F_{q} \setminus \{ 0 \}$. 
The following theorem holds.

\begin{tm} \label{lcd_subcode}
Let $\Pi$ be an equitable partition of a $d$-class association scheme $(X,\mathcal{R})$ with $t$ cells of the same length $\frac{|X|}{t}$,
$\mathcal{A}= \{A_0, A_1, \dots, A_d \}$ be the set of adjacency matrices of $(X,\mathcal{R})$,
and let $M_i$ denote the corresponding quotient matrix of $A_i$ with respect to $\Pi$.
Further, let $I=\{i_1, i_2, \dots, i_s \} \subseteq \{0, 1, \dots, d \}$ and $p|p_{i,j}^k$, for all $k \in \{0,1,\dots,d\}$ and all $i, j \in I$, where $p$ is a prime number.
Then the set of row spaces of the matrices  
$N_x=\left[ \begin{array}{c|c}
X & \alpha_x I_t
\end{array} \right]$, $\alpha_x \in \mathbb F_{q} \setminus \{ 0 \}$,  
where $X$ is a nonzero element of the matrix algebra generated by the matrices $M_i$, $i \in I$, forms an LCD subspace code $C_S \subseteq \mathbb F_q^{2t}$, for some positive integer $r$ after $q = p^r$.
\end{tm}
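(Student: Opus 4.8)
The plan is to reduce the whole statement to a single application of Lemma \ref{LCD-lemma}. By definition, the set of row spaces forms an LCD subspace code precisely when $C_i\cap C_j^\perp=\{0\}$ for every ordered pair of codewords $C_i,C_j$, and Lemma \ref{LCD-lemma} produces exactly this conclusion (for both orders simultaneously) from the nonsingularity of one product of generator matrices. So the proof comes down to two checks: first, that each $N_x$ genuinely generates a $[2t,t]_q$ code, and second, that $N_xN_y^{\top}$ is nonsingular for any two codewords.

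For the first check, observe that $N_x=\left[\begin{array}{c|c} X & \alpha_x I_t\end{array}\right]$ is a $t\times 2t$ matrix whose last $t$ columns form the invertible block $\alpha_x I_t$ (here $\alpha_x\neq 0$ is used). Hence its $t$ rows are linearly independent, $N_x$ has rank $t$, and its row space is a $t$-dimensional subspace of $\mathbb{F}_q^{2t}$. In particular every codeword has dimension $t$, so $C_S$ is in fact a constant dimension code.

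For the second check, I would first collect the structural facts already assembled in the discussion preceding the theorem. Since all $t$ cells have the same length, $H^{\top}H$ is a nonzero scalar multiple of $I_t$, so formula (\ref{form2a}) together with the symmetry of each $A_i$ makes every $M_i$ symmetric; this property persists after reduction modulo $p$. By Theorem \ref{th_struc_const} one has $M_iM_j=\sum_{k=0}^{d}p_{i,j}^kM_k$ as an identity with integer structure constants, and the hypothesis $p\mid p_{i,j}^k$ for all $k$ and all $i,j\in I$ forces $M_iM_j=0$ over $\mathbb{F}_q$ for every pair $i,j\in I$. Consequently the $\mathbb{F}_q$-algebra $\overline{\mathcal{M}}$ generated by $\{M_i:i\in I\}$ has identically zero multiplication and therefore collapses to the $\mathbb{F}_q$-span of the $M_i$, $i\in I$. (The hypothesis silently excludes $0\in I$, since $p_{0,0}^0=1$ is never divisible by $p$.) Now for arbitrary nonzero $X=\sum_{i\in I}c_iM_i$ and $Y=\sum_{j\in I}d_jM_j$ in $\overline{\mathcal{M}}$, symmetry gives $Y^{\top}=Y$, so
$$N_xN_y^{\top}=XY^{\top}+\alpha_x\alpha_y I_t=\sum_{i,j\in I}c_id_jM_iM_j+\alpha_x\alpha_y I_t=\alpha_x\alpha_y I_t,$$
which is nonsingular because $\alpha_x\alpha_y\neq 0$. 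Applying Lemma \ref{LCD-lemma} to $N_x$ and $N_y$ yields $C_x\cap C_y^{\perp}=C_y\cap C_x^{\perp}=\{0\}$, and the case $x=y$ also gives $C_x\cap C_x^{\perp}=\{0\}$. As this holds for every pair, the row spaces form an LCD subspace code in $\mathbb{F}_q^{2t}$.

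The genuine work sits in the algebraic reduction $M_iM_j=0$ over $\mathbb{F}_q$ and the resulting collapse of $\overline{\mathcal{M}}$ to the span of its generators, but this has essentially been carried out already, so the theorem itself is a clean assembly. The one point that deserves care, and which I regard as the main obstacle, is ensuring that \emph{the matrix algebra generated by the $M_i$} is read as the non-unital algebra, so that $I_t$ is not automatically a member: if $I_t\in\overline{\mathcal{M}}$ were permitted, then $X=Y=I_t$ would give $N_xN_y^{\top}=(1+\alpha_x\alpha_y)I_t$, which is singular whenever $\alpha_x\alpha_y=-1$, and the LCD property would fail. The zero-multiplication structure guarantees $\overline{\mathcal{M}}=\mathrm{span}_{\mathbb{F}_q}\{M_i:i\in I\}$ exactly, so this pitfall does not occur.
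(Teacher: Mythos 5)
Your proof is correct and follows essentially the same route as the paper: the discussion preceding the theorem establishes that the quotient matrices $M_i$ are symmetric and satisfy $M_iM_j=0$ over $\mathbb F_q$, so that $N_xN_y^{\top}=\alpha_x\alpha_y I_t$ is nonsingular, and the paper's proof then consists of exactly this observation plus an appeal to Lemma \ref{LCD-lemma}, just as in your argument. The extra points you make explicit --- that each $N_x$ has full rank $t$, that the hypothesis forces $0\notin I$, and that the algebra generated by the $M_i$ must be read as non-unital (otherwise $X=Y=I_t$ with $\alpha_x\alpha_y=-1$ would violate the claim) --- are details the paper leaves implicit.
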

\begin{proof}
For two matrices $N_x$ and $N_y$, the matrix $N_xN_y^\top=\alpha_x \alpha_y I_t$ is nonsingular. Lemma \ref{LCD-lemma} leads us to the conclusion that $C_S$ is an LCD subspace code.
\end{proof}

\begin{remark}
Note that the LCD subspace code $C_S$ constructed by applying Theorem \ref{lcd_subcode} is a constant dimension code with parameters $(2t,\#C_S,d;t)_q$.
\end{remark}

%Let $\mathcal{A}= \{A_0, A_1, ..., A_d \}$, be the set of $n \times n$ adjacency matrices of the association scheme $(X,\mathcal{R})$, and let $q=p^m$ be a prime power.
%Let us consider the matrix algebra $\overline {\cal A}$ over $\mathbb F_q$ generated by matrices $A_{i_1}, A_{i_2}, ..., A_{i_t}$, $I=\{i_1, i_2, ..., i_t\} \subseteq \{0, 1, ..., d \}$, $t \geq 2$.
%If $p|p_{x,y}^k$, for all $k \in \{0,1,...,d\}$ and all choices of $x, y \in I$, then the row spaces of the elements of $\overline {\cal A}$ are mutually orthogonal.
%Therefore, the set of the row spaces of elements of $\overline {\cal A}$ forms a self-orthogonal subspace code $C_S \subseteq \mathbb F_q^n$.
%Moreover, because of Theorem \ref{th_struc_const} the following theorem holds.
%
%\begin{tm} \label{so_subcode}
%Let $\Pi$ be an equitable partition of a $d$-class association scheme $(X,\mathcal{R})$ with $n$ cells of the same length $\frac{|X|}{n}$ and let $p$ be a prime number.
%Further, let $I=\{i_1, i_2, ..., i_t\} \subseteq \{0, 1, ..., d \}$ and $p|p_{x,y}^k$, for all $k \in \{0,1,...,d\}$ and all $x, y \in I$.
%Then the set of row spaces of elements of the matrix algebra generated by the matrices $M_i$, $i \in I$, forms a self-orthogonal subspace code $C_S \subseteq \mathbb F_q^n$,
%where $q=p^m$ is a prime power.
%\end{tm}

\subsection{Examples from distance-regular graphs}
In this subsection, we show how one can apply the method introduced in Theorem \ref{lcd_subcode} and obtain results.

Let $\Gamma$ be a graph with diameter $d$, and let $\delta(u,v)$ denote the distance between vertices $u$ and $v$ of $\Gamma$.
The $i$th-neighborhood of a vertex $v$ is the set $\Gamma_{i}(v) = \{ w : \delta(v,w) = i \}$. 
Similarly, we define $\Gamma_{i}$ to be the $i$th-distance graph of $\Gamma$, that is, the vertex set of $\Gamma_{i}$ is the same as for $\Gamma$, with adjacency in 
$\Gamma_{i}$ defined by the $i$th distance relation in $\Gamma$.
We say that $\Gamma$ is distance-regular if the distance relations of $\Gamma$ give the relations of a $d$-class association scheme, that is, for every choice of $0 \leq i,j,k \leq d$, 
all vertices $v$ and $w$ with $\delta(v,w)=k$ satisfy $|\Gamma_{i}(v) \cap \Gamma_{j}(w)| = p^{k}_{ij}$ for some constant $p^{k}_{ij}$.
In a distance-regular graph, we have that $p^{k}_{ij}=0$ whenever $i+j < k$ or $k<|i-j|$.
A distance-regular graph $\Gamma$ is necessarily regular with degree $p^{0}_{11}$; more generally, each distance graph $\Gamma_{i}$ is regular with degree $k_{i}=p^{0}_{ii}$.
An equivalent definition of distance-regular graphs is the existence of the constants $b_{i}=p^{i}_{i+1,1}$ and $c_{i}= p^{i}_{i-1,1}$ for $0 \leq i \leq d$ (notice that $b_{d}=c_{0}=0$).
The sequence $\{b_0,b_1,\dots,b_{d-1};c_1,c_2,\dots,c_d\}$, where $d$ is the diameter of $\Gamma$, is called the intersection array of $\Gamma$. 
Clearly, $b_0=k$, $b_d=c_0=0$, $c_1=1$.

Let $\Gamma$ be a distance-regular graph with diameter $d$ and adjacency matrix $A$, and let $A_i$ denote the distance-$i$ matrix of $\Gamma$, $i=0, 1, \dots, d$. 
Further, let $G$ be an automorphism group of $\Gamma$. 
In the sequel, the quotient matrix of $A_i$, $i=1, 2, \dots, d$, with respect to the orbit partition induced by $G$ will be denoted by $M_i$. 

Since the adjacency matrices $A_i$, $i=1, 2, \dots, d$, of a distance-regular graph give an association scheme, the next corollary follows from Theorem \ref{lcd_subcode}.

\begin{kor} \label{cor_drg_sub}
Let $\Gamma$ be a distance-regular graph with diameter $d$, and  let an automorphism group $G$ act on $\Gamma$ with $t$ orbits of the same length.
Further, let $I=\{ i_1, i_2, \dots, i_s \} \subseteq \{0, 1,\dots, d \}$ and $p$ be a prime number such that $p|p_{i,j}^k$, for all $k \in \{0,1,\dots,d\}$ and all $i, j \in I$.
Then the set of row spaces of the matrices  
$N_x=\left[ \begin{array}{c|c}
X & \alpha_x I_t
\end{array} \right]$, $\alpha_x \in \mathbb F_{q} \setminus \{ 0 \}$,  
where $X$ is a nonzero element of the matrix algebra generated by the matrices $M_i$, $i \in I$, forms an LCD subspace code $C_S \subseteq \mathbb F_q^{2t}$, for some positive integer $r$ after $q = p^r$.
\end{kor}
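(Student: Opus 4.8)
The plan is to verify that the hypotheses of Theorem \ref{lcd_subcode} are satisfied for the orbit partition and then invoke that theorem directly. First I would observe that, by the definition of a distance-regular graph recalled above, the distance-$i$ matrices $A_0, A_1, \dots, A_d$ of $\Gamma$ are exactly the adjacency matrices of a $d$-class association scheme $(X,\mathcal{R})$ on the vertex set $X$ of $\Gamma$, whose intersection numbers are the constants $p_{ij}^k = |\Gamma_i(v) \cap \Gamma_j(w)|$ occurring in the hypothesis of the corollary. Thus the quotient matrices $M_i$ named in the statement are precisely the quotient matrices of the graphs $\Gamma_i$ of this scheme with respect to the orbit partition, so the notation already matches that of Theorem \ref{lcd_subcode}.

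The substantive step is to check that the orbit partition $\Pi$ induced by $G$ is an \emph{equitable} partition of the association scheme $(X,\mathcal{R})$. Since every element of $G$ is an automorphism of $\Gamma$, it preserves graph distance $\delta$, and hence fixes each distance relation $R_i$ setwise; consequently it is an automorphism of each distance graph $\Gamma_i$. I would then use the standard fact that the partition into orbits of an automorphism group of a graph is equitable with respect to that graph: for any two cells $X_a, X_b$ and any vertex $v \in X_a$, the number of neighbours of $v$ in $X_b$ within $\Gamma_i$ is independent of the chosen $v \in X_a$, because $G$ is transitive on $X_a$ and preserves adjacency in $\Gamma_i$. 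Applying this to every $i \in \{1,\dots,d\}$ shows that $\Pi$ is equitable with respect to each $A_i$ simultaneously, which is exactly the definition of an equitable partition of the scheme.

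At this point every assumption of Theorem \ref{lcd_subcode} is in force: by hypothesis $G$ has $t$ orbits all of the same length, so the $t$ cells of $\Pi$ have the common size $|X|/t$; we have a prime $p$ and an index set $I \subseteq \{0,1,\dots,d\}$ with $p \mid p_{ij}^k$ for all $k$ and all $i,j \in I$; and the matrices $M_i$ are the quotient matrices with respect to this equitable partition. The conclusion of the corollary then coincides verbatim with that of Theorem \ref{lcd_subcode}, so the result follows immediately by applying that theorem.

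The only step demanding genuine care is verifying that a single orbit partition is equitable with respect to \emph{all} $d$ distance matrices at once; this is precisely where one uses that $G$ consists of automorphisms of $\Gamma$ itself, rather than of just one distance graph, via the observation that graph automorphisms preserve every distance relation. Everything else is a matter of matching notation and citing Theorem \ref{lcd_subcode}.
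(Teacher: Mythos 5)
Your proof is correct and takes essentially the same route as the paper, which simply notes that the corollary follows from Theorem \ref{lcd_subcode} because the distance matrices of a distance-regular graph form a $d$-class association scheme. The one detail you spell out that the paper leaves implicit --- that the orbit partition of an automorphism group of $\Gamma$ is simultaneously equitable for every distance graph $\Gamma_i$, since automorphisms preserve all distance relations --- is exactly the right point to verify, and your argument for it is sound.
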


To illustrate the method given in Corollary \ref{cor_drg_sub}, we give the following examples.

\begin{ex}
Let $\Gamma_1$ be a distance-regular graph having 200 vertices, diameter $d=5$, and intersection array $\{22,21,16,6,1;1,6,16,21,22\}$ known as Doubled Higman-Sims graph, see \cite{BCN}. It is easy to see that a construction for obtaining LCD subspace codes described in Corollary \ref{cor_drg_sub} can be applied only to the quotient matrices $M_1$ and $M_4$ obtained with respect to the orbit partition induced by the action of an automorphism group of the graph $\Gamma_1$. Let $H_1\cong Z_5:Z_4$ be a subgroup of the full automorphism group of $\Gamma_1$. Applying Corollary \ref{cor_drg_sub} and the group $H_1$ one obtains an LCD subspace code with parameters $(20,16,2;10)_2$. Let $H_2\cong Z_{10}$ be a subgroup of the full automorphism group of $\Gamma_1$. Applying Corollary \ref{cor_drg_sub} and the group $H_2$ one obtains an LCD subspace code with parameters $(40,5,2;20)_2$.
\end{ex}

\begin{ex}
Let $\Gamma_2$ be a distance-regular graph having 154 vertices, diameter $d=5$, and intersection array $\{16,15,12,4,1;1,4,12,15,16\}$ known as Doubled $M_{22}$ graph, see \cite{BCN}. A construction described in Corollary \ref{cor_drg_sub} can be applied only to the quotient matrices $M_1$ and $M_4$ obtained with respect to the orbit partition induced by the action of an automorphism group of the graph $\Gamma_2$. Let $H\cong Z_{14}$ be a subgroup of the full automorphism group of $\Gamma_2$. Applying Corollary \ref{cor_drg_sub} and the group $H$ one constructs an LCD subspace code with parameters $(22,4,2;11)_2$.
\end{ex}

\section{Constructions from mutually unbiased Hadamard matrices} \label{MUH}

A \emph{Hadamard matrix} of order $n$ is a $n \times n$ $(-1, 1)$-matrix $H$ such that $HH^{\top} = n I_{n}$. 
%where $I_{n}$ is the identity matrix of order $n$.
It is well known that a Hadamard matrix of order $n$ can exists only if $n$ = 1, 2 or $n \equiv 0 \mod 4$. The Hadamard conjecture states
that these necessary conditions are also sufficient. Since the discovery of a Hadamard matrix of order 428, the smallest open case is $n$ = 668 (see \cite{Hadi-428}).

Two Hadamard matrices $H$ and $K$ of order $n$ are called \emph{unbiased} if $HK^{\top} = \sqrt{n}L$, where $L$ is a Hadamard matrix of order $n$.
Clearly, unbiased Hadamard matrices exist only in square orders.
If $\{ H_1, H_2, \ldots , H_m \}$ is a set of mutually unbiased Hadamard matrices of order $2n$, then $m \le n$ (see \cite[Theorem 2.]{MUH-CM}).
This upper bound is attained for Hadamard matrices of order $4^k$ (see \cite{qf-GF(2)}).
We refer the reader to \cite{MUH-CM, MUH-NATO} for more information on mutually unbiased Hadamard matrices.

\begin{tm} \label{lcd-MUH}
Let $\{ H_1, H_2, \ldots , H_m \}$ be a set of mutually unbiased Hadamard matrices of order $n$. Further, let $p$ be a prime number dividing $\sqrt{n}$ and $\mathbb{F}_q$ be the finite field of order $q$, for some positive integer $r$ after $q = p^r$. Then the set of row spaces of the matrices  
$N_x=\left[ \begin{array}{c|c}
X & \alpha_x I_n
\end{array} \right]$, $\alpha_x \in \mathbb F_{q} \setminus \{ 0 \}$,  
where $X$ is a nonzero element of the matrix algebra generated by the matrices $H_i$, $i=1,2, \ldots , m$, forms an LCD subspace code $C_S \subseteq \mathbb F_q^{2n}$.
\end{tm}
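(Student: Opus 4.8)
The plan is to follow the template of the proof of Theorem~\ref{lcd_subcode}: by Lemma~\ref{LCD-lemma} it suffices to show that $N_xN_y^{\top}$ is nonsingular for any two generator matrices coming from nonzero algebra elements $X$ and $Y$. Since $N_x=[\,X\mid\alpha_xI_n\,]$ and $N_y=[\,Y\mid\alpha_yI_n\,]$, one computes $N_xN_y^{\top}=XY^{\top}+\alpha_x\alpha_yI_n$, so the whole problem reduces to proving that $XY^{\top}=0$ over $\mathbb{F}_q$ for all $X,Y$ lying in the algebra generated by $H_1,\dots,H_m$ (understood, as in Theorem~\ref{lcd_subcode}, as the span of all products $H_{i_1}\cdots H_{i_k}$ with $k\geq 1$).

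First I would pass to $\mathbb{F}_q$ by reducing the $(-1,1)$-entries of the $H_i$, together with their defining integer identities, modulo $p$. Because $p\mid\sqrt n$ we have $p\mid n$, so the relation $H_iH_i^{\top}=nI_n$ collapses to $0$, while for $i\neq j$ the unbiasedness relation $H_iH_j^{\top}=\sqrt n\,L_{ij}$ (with $L_{ij}$ a Hadamard matrix of order $n$) also collapses to $0$ since $p\mid\sqrt n$. Hence $H_iH_j^{\top}=0$ over $\mathbb{F}_q$ for all $i,j$, the diagonal case $i=j$ included.

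The key step, and the point at which this argument genuinely departs from that of Theorem~\ref{lcd_subcode} (where the products of generators already vanish and the algebra is merely their span), is to control every element of the algebra rather than just the generators. Let $V_i=\mathrm{rowsp}(H_i)$ over $\mathbb{F}_q$ and put $W=\sum_i V_i$. The identities $H_iH_j^{\top}=0$ say exactly that every row of $H_i$ is orthogonal to every row of $H_j$ under the Euclidean form, for all $i,j$; therefore $\langle u,v\rangle=0$ for all $u,v\in W$, that is, $W$ is totally isotropic. Now $\mathrm{rowsp}(AB)\subseteq\mathrm{rowsp}(B)$ for any matrices $A,B$, so each word $H_{i_1}\cdots H_{i_k}$ has its row space inside $V_{i_k}\subseteq W$, and consequently every $\mathbb{F}_q$-linear combination of such words, i.e.\ every element $X$ of the algebra, satisfies $\mathrm{rowsp}(X)\subseteq W$. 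Each entry of $XY^{\top}$ is the Euclidean inner product of a row of $X$ with a row of $Y$, both in the totally isotropic space $W$, whence $XY^{\top}=0$.

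With $XY^{\top}=0$ in hand, $N_xN_y^{\top}=\alpha_x\alpha_yI_n$ is nonsingular because $\alpha_x,\alpha_y\in\mathbb{F}_q\setminus\{0\}$, and Lemma~\ref{LCD-lemma} applied to every pair (and to $x=y$) yields $C_x\cap C_y^{\perp}=\{0\}$ throughout, so $C_S$ is an LCD subspace code. Finally I would note that the block $\alpha_xI_n$ forces $N_x$ to have full row rank $n$, so each codeword is a genuine $n$-dimensional subspace of $\mathbb{F}_q^{2n}$ (and $C_S$ is in fact a constant dimension code of dimension $n$). I expect the only real obstacle to be the passage from the generators to arbitrary algebra elements; once phrased through the totally isotropic subspace $W$ this is immediate, and the remaining steps—the reduction modulo $p$ and the rank count—are routine.
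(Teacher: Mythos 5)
Your proof is correct, and its skeleton matches the paper's: reduce the integer identities $H_iH_i^{\top}=nI_n$ and $H_iH_j^{\top}=\sqrt{n}\,L$ modulo $p$, conclude that $N_xN_y^{\top}=\alpha_x\alpha_y I_n$ is nonsingular, and invoke Lemma~\ref{LCD-lemma}. Where you genuinely add something is the step you yourself flag: passing from the generators to arbitrary elements of the algebra. The paper's proof verifies only the generator relations and then asserts $N_xN_y^{\top}=\alpha_x\alpha_y I_n$ outright; that leap is harmless in Theorem~\ref{lcd_subcode}, where the $M_i$ are symmetric and their pairwise products vanish mod $p$, so the algebra is nothing more than the span of the generators, but here the $H_i$ are not symmetric and the products $H_iH_j$ (no transpose) need not vanish modulo $p$, so the algebra is strictly larger than the span of the $H_i$ and the extension requires an argument. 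Your totally isotropic subspace $W=\sum_i \mathrm{rowsp}(H_i)$ combined with $\mathrm{rowsp}(AB)\subseteq\mathrm{rowsp}(B)$ closes this gap cleanly. An equivalent, slightly more direct route to the same conclusion: for words $X=H_{i_1}\cdots H_{i_k}$ and $Y=H_{j_1}\cdots H_{j_l}$ one has
\begin{equation*}
XY^{\top}=H_{i_1}\cdots H_{i_{k-1}}\bigl(H_{i_k}H_{j_l}^{\top}\bigr)H_{j_{l-1}}^{\top}\cdots H_{j_1}^{\top}=0
\end{equation*}
over $\mathbb{F}_q$, since the middle factor vanishes for every choice of $i_k, j_l$, and bilinearity extends this to the whole algebra. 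Your two side remarks are also correct and worth keeping: the algebra must be understood as non-unital (if $I_n$ were allowed, taking $X=Y=I_n$ with $\alpha_x\alpha_y=-1$ would make $N_xN_y^{\top}$ singular and the statement false), and the block $\alpha_x I_n$ guarantees each $N_x$ has full row rank $n$, so Lemma~\ref{LCD-lemma} applies and the code is a constant dimension code in $\mathbb{F}_q^{2n}$.
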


\begin{proof}
For matrices $H_i$ and $H_j$, $1 \le i,j \le m$, $i \neq j$, it holds that $H_iH_i^{\top}=H_jH_j^{\top}=nI_n$ and $H_iH_j^{\top}=\sqrt{n}L$, where $L$ is a Hadamard matrix.
Hence, for two matrices $N_x$ and $N_y$, the matrix $N_xN_y^\top=\alpha_x \alpha_y I_n$, which is a nonsingular matrix. It follows from Lemma \ref{LCD-lemma} that the subspace determined by $N_x$ intersect the orthogonal complement of the subspace determined by $N_y$ trivially, so $C_S$ is an LCD subspace code.
\end{proof}

Weighing matrices are a generalization of Hadamard matrices. A matrix $W = [w_{ij}]$ of order $n$ and $w_{ij} \in \{-1, 0, 1\}$ is called a weighing matrix with weight $k$, if $WW^{\top} = kI_n$.
If $k=n$, then $WW^{\top} = nI_n$ and the weighing matrix $W$ is a Hadamard matrix.

Two weighing matrices $W_1$ and $W_2$ of order $n$ and weight $k$ are called unbiased, if $W_1W_2^{\top}  = \sqrt{k}W$, where $W$ is a weighing matrix of order $n$ and weight $k$.
Mutually unbiased weighing matrices have been studied in \cite{MUH-CM, MUH-NATO}.
The following theorem is a generalization of Theorem \ref{lcd-MUH}, and can be proven in a similar way.

\begin{tm} \label{lcd-WUH}
Let $\{ W_1, W_2, \ldots , W_m \}$ be a set of mutually unbiased weighing matrices of order $n$ and weight $k$. Further, let $p$ be a prime number dividing $\sqrt{k}$ and $\mathbb{F}_q$ be the finite field of order $q$, for some positive integer $r$ after $q = p^r$. Then the set of row spaces of the matrices  
$N_x=\left[ \begin{array}{c|c}
X & \alpha_x I_n
\end{array} \right]$, $\alpha_x \in \mathbb F_{q} \setminus \{ 0 \}$,  
where $X$ is a nonzero element of the matrix algebra generated by the matrices $W_i$, $i=1,2, \ldots , m$, forms an LCD subspace code $C_S \subseteq \mathbb F_q^{2n}$.
\end{tm}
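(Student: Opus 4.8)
The plan is to follow the proof of Theorem \ref{lcd-MUH} line by line, the only change being that a weighing matrix of weight $k$ satisfies $W_iW_i^\top=kI_n$ in place of $H_iH_i^\top=nI_n$. As there, the entire statement reduces to showing that for any two of the matrices $N_x$ one has $N_xN_y^\top=\alpha_x\alpha_yI_n$, a nonsingular matrix, after which Lemma \ref{LCD-lemma} gives $C_i\cap C_j^\perp=\{0\}$ for the corresponding row spaces $C_i,C_j$ and hence that $C_S$ is LCD.

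First I would record the integral identities $W_iW_i^\top=kI_n$ for each $i$ and $W_iW_j^\top=\sqrt{k}\,W$ for $i\neq j$, where $W$ is a weighing matrix of order $n$ and weight $k$; note that $\sqrt{k}\in\mathbb Z$, since $\sqrt{k}\,W$ has integer entries. The hypothesis $p\mid\sqrt{k}$ then gives $p^2\mid k$, so $p\mid k$, and after reducing modulo $p$ and working in $\mathbb F_q$ with $q=p^r$ both families of identities collapse to
$$W_aW_b^\top=0\qquad\text{for all }a,b\in\{1,\dots,m\},$$
because $\sqrt{k}\equiv 0$ and $k\equiv 0$ in $\mathbb F_q$. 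This plays exactly the role of the divisibility condition $p\mid p_{i,j}^k$ used in Theorem \ref{lcd_subcode}.

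Next I would extend this relation from the generators to the full matrix algebra, and this is where I expect the one genuine subtlety. Unlike the symmetric, commuting adjacency matrices of an association scheme, weighing matrices need not be symmetric and need not commute, so the algebra they generate is not merely their linear span and the products of generators do not all vanish on their own. The key observation is that for the pairing $(X,Y)\mapsto XY^\top$ only the innermost factor is relevant: for monomials $X=W_{i_1}\cdots W_{i_s}$ and $Y=W_{j_1}\cdots W_{j_t}$ of degree at least one,
$$XY^\top=W_{i_1}\cdots W_{i_{s-1}}\bigl(W_{i_s}W_{j_t}^\top\bigr)W_{j_{t-1}}^\top\cdots W_{j_1}^\top=0$$
over $\mathbb F_q$, since the bracketed factor is zero. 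By bilinearity $XY^\top=0$ for all $X,Y$ in the (non-unital) algebra generated by $W_1,\dots,W_m$. The same computation also shows that $I_n$ cannot belong to a nonzero such algebra, so there is no exceptional choice of $X$ for which the argument would break down.

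Finally, for $N_x=\left[ \begin{array}{c|c} X & \alpha_x I_n \end{array} \right]$ and $N_y=\left[ \begin{array}{c|c} Y & \alpha_y I_n \end{array} \right]$ with $X,Y$ nonzero algebra elements and $\alpha_x,\alpha_y\in\mathbb F_q\setminus\{0\}$, the block product gives $N_xN_y^\top=XY^\top+\alpha_x\alpha_yI_n=\alpha_x\alpha_yI_n$, which is nonsingular. Because the block $\alpha_xI_n$ is invertible, each $N_x$ has full row rank $n$, so its row space is an $n$-dimensional subspace of $\mathbb F_q^{2n}$; applying Lemma \ref{LCD-lemma} to every ordered pair $(x,y)$, including $x=y$, yields $C_i\cap C_j^\perp=\{0\}$ throughout. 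Hence $C_S$ is an LCD subspace code, and since all of its codewords have dimension $n$ it is in fact a constant dimension code in $\mathbb F_q^{2n}$.
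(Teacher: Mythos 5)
Your proposal is correct and follows essentially the same route as the paper, which proves Theorem \ref{lcd-WUH} by the same argument as Theorem \ref{lcd-MUH}: reduce everything over $\mathbb F_q$ to $W_aW_b^\top=0$, conclude $N_xN_y^\top=\alpha_x\alpha_y I_n$ is nonsingular, and invoke Lemma \ref{LCD-lemma}. Your explicit treatment of how the vanishing of $W_aW_b^\top$ propagates from the generators to arbitrary (non-unital) algebra elements via the innermost factor of $XY^\top$ is a detail the paper leaves implicit, but it is a refinement of the same proof rather than a different approach.
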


In order to illustrate the method given in Theorem \ref{lcd-WUH}, we give the following example.

\begin{ex}
Let $\{W_1, W_2, W_3, K\}$ be mutually unbiased weighing matrices of order 16 and weight nine from \cite[Example 19.]{MUH-CM}. The construction described in Theorem \ref{lcd-WUH} can be applied only for a construction of LCD subspace codes over the finite field of order three since the weights of the matrices are 9. Applying the method given in Theorem \ref{lcd-WUH}, the set of row spaces of the matrices 
$N_x$, where $X \in \{W_1, W_2, W_3, K \}$ and $\alpha_x\in \{1,2\}$, leads to a LCD subspace code $(32,81,6;16)_3$. However, one can consider all the subalgebras generated by the subsets of the set 
$\{W_1, W_2, W_3, K\}$ containing three matrices. Applying the method in these cases, we obtain the following: the set of row spaces of the matrices $N_x$, where $X \in \{W_1, W_2, K\}$ (and the same is obtained for the subsets $\{W_1, W_3, K\}$ and $\{W_2, W_3, K\}$) and $\alpha_x\in \{1,2\}$, leads to a LCD subspace code $(32,27,8;16)_3$, and the set of row spaces of the matrices $N_x$, where 
$X \in \{W_1, W_2, W_3\}$ and $\alpha_x\in \{1,2\}$, leads to a LCD subspace code $(32,27,6;16)_3$. This example shows that applying the method given in Theorem \ref{lcd-WUH} we can obtain interesting constant dimension LCD subspace codes.
\end{ex}

In the sequel, we will present construction of LCD subspace codes from quotient matrices and association schemes of three, five and eight classes obtained from mutually unbiased Hadamard matrices.

\subsection{Quotient matrices of mutually unbiased Hadamard matrices}

Let a group $G$ acts as an automorphism group of a Hadamard matrix $H$. Then $G$-orbits form an equitable partition of the matrix $H$ and the corresponding quotient matrix is called an orbit matrix of $H$ (see \cite{orb-Had}).

Let $\{ H_1, H_2, \ldots , H_m \}$ be a set of mutually unbiased Hadamard matrices of order $n$. Further, let the rows and columns of the matrices $\{ H_1, H_2, \ldots , H_m \}$ are indexed by the elements of the set $X= \{1, \ldots ,n \}$. An equitable partition of the set $\{ H_1, H_2, \ldots , H_m \}$ of mutually unbiased Hadamard matrices is a partition $\Pi= \{C_1, C_2, \ldots , C_{t} \}$ 
of $X$ which is equitable with respect to each of the matrices $H_i$, $i=1,2, \ldots ,m$. Let $C$ be the characteristic matrix of $\Pi$. Let $M_i$ denotes the $t \times t$ quotient matrix of $H_i$ with respect to the partition $\Pi$. Then
$$H_iC=CM_i.$$
The matrix $C^{\top}C$ is diagonal and invertible and therefore,
$$M_i=(C^{\top}C)^{-1}C^{\top} H_i C.$$

Let $H_1$ and $H_2$ be unbiased Hadamard matrices of order $n$. Further, let $\Pi$ be an equitable partition of the set $\{ H_1, H_2 \}$ with $t$ cells of the same length $\frac{n}{t}$. 
It holds that 
$$M_1M_2^{\top}=(C^{\top}C)^{-1}C^{\top}H_1CC^{\top}H_2^{\top}C((C^{\top}C)^{-1})^{\top}.$$
Since $C^{\top}C$ is diagonal matrix, $(C^{\top}C)^{-1}$ is also diagonal, hence
$$M_1M_2^{\top}=(C^{\top}C)^{-1}C^{\top}H_1CC^{\top}H_2^{\top}C(C^{\top}C)^{-1}.$$

Since 
$$H_1(CC^{\top})=(CC^{\top})H_1=Q_1 \otimes J_{\frac{n}{t}},$$ 
where $Q_1$ is the quotient matrix of $H_1$ with respect to $\Pi$ and $H_1H_2^{\top}=\sqrt{n}L$, for some Hadamard matrix $L$, it holds that
$$M_1M_2^{\top}=(C^{\top}C)^{-1}C^{\top}CC^{\top}H_1H_2^{\top}C(C^{\top}C)^{-1}=(C^{\top}C)^{-1}C^{\top}CC^{\top}\sqrt{n}LC(C^{\top}C)^{-1}.$$

Hence, the following theorem holds.

\begin{tm} \label{lcd_Had-orb}
Let $S=\{ H_1,H_2, \ldots , H_m \}$ be a set of mutually unbiased Hadamard matrices of order $n$.
Further, let $\Pi$ be an equitable partition of the set $S$, with $t$ cells of the same length $\frac{n}{t}$, and let $M_i$ denote the corresponding quotient matrix of $H_i$ with respect to 
$\Pi$, $i=1,2, \ldots, m$. Let $p$ be a prime number dividing $\sqrt{n}$.
Then the set of row spaces of the matrices  
$N_x=\left[ \begin{array}{c|c}
X & \alpha_x I_t
\end{array} \right]$, $\alpha_x \in \mathbb F_{q} \setminus \{ 0 \}$,  
where $X$ is a nonzero element of the matrix algebra generated by the matrices $M_1,M_2, \ldots , M_m$, forms an LCD subspace code $C_S \subseteq \mathbb F_q^{2t}$, for some positive integer $r$ after $q = p^r$.
\end{tm}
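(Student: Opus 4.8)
The plan is to reduce the LCD property, via Lemma \ref{LCD-lemma}, to showing that $N_xN_y^\top$ is nonsingular for every ordered pair of codewords, and to obtain this by proving that the off-diagonal block $XY^\top$ vanishes over $\mathbb F_q$ for all $X,Y$ in the algebra generated by $M_1,\dots,M_m$. First I would record the generator-level identities. Writing $c=\tfrac nt$ for the common cell size, the relation $H_i(CC^\top)=(CC^\top)H_i$ stated before the theorem gives, for $i=j$, the integral identity $M_iM_i^\top=\tfrac1c\,C^\top H_iH_i^\top C=\tfrac nc\,C^\top C=nI_t$; and for $i\ne j$ the displayed computation preceding the theorem gives $M_iM_j^\top=\sqrt n\,(C^\top C)^{-1}C^\top L C$, that is, $\sqrt n$ times the quotient matrix of the Hadamard matrix $L=\tfrac1{\sqrt n}H_iH_j^\top$. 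Since $p\mid\sqrt n$ forces $p^2\mid n$, both $nI_t$ and the $i\ne j$ expression reduce to the zero matrix in $\mathbb F_q$, so $M_iM_j^\top=0$ over $\mathbb F_q$ for all $i,j$.

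The step needing the most care is the $i\ne j$ case, because the factor $(C^\top C)^{-1}$ a priori introduces denominators and one must be sure that multiplying by $\sqrt n$ genuinely yields divisibility by $p$. I would settle this by checking that $\Pi$ is row-equitable with respect to $L$. Using $H_iC=CM_i$ together with the consequence of equal cell sizes that the column-quotient matrix equals the row-quotient matrix, namely $C^\top H_j=M_jC^\top$ and hence $H_j^\top C=CM_j^\top$, one computes $LC=\tfrac1{\sqrt n}H_iH_j^\top C=\tfrac1{\sqrt n}CM_iM_j^\top$, which exhibits $LC=CK$ with $K=\tfrac1{\sqrt n}M_iM_j^\top$. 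The relation $LC=CK$ says precisely that the row sums of the $(\pm1)$-matrix $L$ over the cells are cell-constant, so $K$ is the integral quotient matrix of $L$; therefore $M_iM_j^\top=\sqrt n\,K$ with $K$ over $\mathbb Z$, and $p\mid\sqrt n$ yields $M_iM_j^\top\equiv 0\pmod p$.

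Next I would lift these generator identities to the whole algebra. For any two words $w_1=M_{a_1}\cdots M_{a_p}$ and $w_2=M_{b_1}\cdots M_{b_r}$ one has $w_1w_2^\top=M_{a_1}\cdots M_{a_p}M_{b_r}^\top\cdots M_{b_1}^\top$, and the junction factor $M_{a_p}M_{b_r}^\top=0$ collapses the entire product; since the identity is not adjoined, every algebra element is a linear combination of words of length at least one, so by bilinearity $XY^\top=0$ for all $X,Y$ in the algebra.

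Finally I would assemble the conclusion exactly as in the proof of Theorem \ref{lcd-MUH}: $N_xN_y^\top=XY^\top+\alpha_x\alpha_y I_t=\alpha_x\alpha_y I_t$ is nonsingular because $\alpha_x,\alpha_y\in\mathbb F_q\setminus\{0\}$, and each $N_x$ has full row rank $t$ owing to its right-hand block $\alpha_xI_t$, so every codeword is a $t$-dimensional subspace of $\mathbb F_q^{2t}$. Lemma \ref{LCD-lemma} then gives $C_i\cap C_j^\perp=\{0\}$ for every ordered pair of codewords, which is exactly the definition of an LCD subspace code. The main obstacle is the integrality and divisibility point in the second paragraph; once that is in place, the remainder is the routine telescoping argument and the direct appeal to Lemma \ref{LCD-lemma}.
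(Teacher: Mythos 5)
Your proof is correct and follows essentially the same route as the paper: the paper's own justification is precisely the computation of $M_iM_j^\top$ through the characteristic matrix $C$ given immediately before the theorem (using $H_iC=CM_i$, the commutation $H_i(CC^\top)=(CC^\top)H_i$, and $H_iH_j^\top=\sqrt{n}L$), followed by the same appeal to Lemma \ref{LCD-lemma} used in Theorems \ref{lcd_subcode} and \ref{lcd-MUH}. Your second paragraph, establishing $LC=CK$ so that $K=\frac{1}{\sqrt{n}}M_iM_j^\top$ is an integer matrix and hence $M_iM_j^\top\equiv 0 \pmod{p}$, makes explicit an integrality point that the paper leaves implicit, so it is a refinement of the paper's argument rather than a different approach.
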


Similarly, the following theorem holds.

\begin{tm} \label{lcd_WM-orb}
Let $S=\{ W_1,W_2, \ldots , W_m \}$ be a set of mutually unbiased weighing matrices of order $n$ and weight $k$.
Further, let $\Pi$ be an equitable partition of the set $S$, with $t$ cells of the same length $\frac{n}{t}$, and let $M_i$ denote the corresponding quotient matrix of $W_i$ with respect to 
$\Pi$, $i=1,2, \ldots, m$. Let $p$ be a prime number dividing $\sqrt{k}$.
Then the set of row spaces of the matrices  
$N_x=\left[ \begin{array}{c|c}
X & \alpha_x I_t
\end{array} \right]$, $\alpha_x \in \mathbb F_{q} \setminus \{ 0 \}$,  
where $X$ is a nonzero element of the matrix algebra generated by the matrices $M_1,M_2, \ldots , M_m$, forms an LCD subspace code $C_S \subseteq \mathbb F_q^{2t}$, for some positive integer $r$ after $q = p^r$.
\end{tm}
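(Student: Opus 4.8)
The plan is to mirror the computation carried out for the Hadamard case that precedes Theorem \ref{lcd_Had-orb}, replacing the defining identities $H_iH_i^\top=nI_n$ and $H_iH_j^\top=\sqrt n\,L$ by their weighing-matrix analogues $W_iW_i^\top=kI_n$ and $W_iW_j^\top=\sqrt k\,W$, and to reduce everything modulo $p$ before invoking Lemma \ref{LCD-lemma}. Writing $C$ for the characteristic matrix of $\Pi$, I would first record that $C^\top C=\tfrac{n}{t}I_t$ is diagonal and invertible, so $(C^\top C)^{-1}$ is symmetric and $M_i=(C^\top C)^{-1}C^\top W_iC$. Equitability of $\Pi$ with respect to each $W_i$ gives the commutation $W_i(CC^\top)=(CC^\top)W_i$, and transposing this shows that $W_i^\top$ commutes with $CC^\top$ as well.

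Next I would compute $M_iM_j^\top$. Using that $(C^\top C)^{-1}$ is symmetric and moving $CC^\top$ past $W_i$ via the commutation relation, one obtains
\[
M_iM_j^\top=C^\top W_iW_j^\top C\,(C^\top C)^{-1}.
\]
For $i=j$ this equals $k\,(C^\top C)(C^\top C)^{-1}=kI_t$, and for $i\neq j$ substituting $W_iW_j^\top=\sqrt k\,W$ gives $\sqrt k\,C^\top WC\,(C^\top C)^{-1}=\sqrt k\,M_W$. The point I would stress is that the unbiased product $W=\tfrac{1}{\sqrt k}W_iW_j^\top$ also commutes with $CC^\top$ (since both $W_i$ and $W_j^\top$ do), so $\Pi$ is equitable with respect to $W$ and $M_W$ is a genuine integer quotient matrix. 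Because $p\mid\sqrt k$ we have $p^2\mid k$, hence modulo $p$ both $kI_t$ and $\sqrt k\,M_W$ vanish. Thus $M_iM_j^\top=0$ in $\mathbb F_q^{t\times t}$ for all $i,j$.

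The final step is the algebra-closure observation, exactly as in Theorem \ref{lcd_subcode}. Every element of the (non-unital) algebra generated by $M_1,\dots,M_m$ over $\mathbb F_q$ is an $\mathbb F_q$-linear combination of products $M_{i_1}\cdots M_{i_\ell}$ with $\ell\ge 1$, and transposing such a product makes it begin with some $M_b^\top$. Hence for any $X,Y$ in the algebra every term of $XY^\top$ contains a junction factor $M_aM_b^\top=0$, so $XY^\top=0$. Consequently
\[
N_xN_y^\top=\bigl[\,X\mid\alpha_xI_t\,\bigr]\bigl[\,Y\mid\alpha_yI_t\,\bigr]^\top=XY^\top+\alpha_x\alpha_yI_t=\alpha_x\alpha_yI_t,
\]
which is nonsingular because $\alpha_x,\alpha_y\neq 0$. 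Lemma \ref{LCD-lemma} then yields $\mathrm{rowsp}(N_x)\cap\mathrm{rowsp}(N_y)^\perp=\{0\}$ for every ordered pair (including $x=y$), so $C_S$ is an LCD subspace code in $\mathbb F_q^{2t}$; it is of constant dimension $t$, the block $\alpha_xI_t$ guaranteeing that each $N_x$ has full row rank.

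I expect the main obstacle to be the reduction step: verifying that the off-diagonal products genuinely vanish modulo $p$, which hinges on $M_W$ being an integer matrix. This is not automatic from $p\mid\sqrt k$ alone, and I would justify it by showing that $\Pi$ is equitable with respect to the unbiased product $W$, so that $\sqrt k\,M_W=M_iM_j^\top$ is an integer matrix each of whose entries is divisible by $p$. Once pairwise vanishing over $\mathbb F_q$ is established, the algebra-closure argument and the application of Lemma \ref{LCD-lemma} are routine and identical to the Hadamard and association-scheme cases.
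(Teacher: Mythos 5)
Your proof is correct and takes essentially the same route as the paper: the paper establishes the Hadamard case by exactly this computation (writing $M_1M_2^{\top}=(C^{\top}C)^{-1}C^{\top}CC^{\top}H_1H_2^{\top}C(C^{\top}C)^{-1}$ via the commutation of each matrix with $CC^{\top}$) and then disposes of the weighing-matrix case with ``similarly,'' which is precisely your substitution of $W_iW_i^{\top}=kI_n$ and $W_iW_j^{\top}=\sqrt{k}\,W$ followed by Lemma \ref{LCD-lemma}. Your explicit verification that the unbiased product $W$ has an integer quotient matrix $M_W$, so that $M_iM_j^{\top}=\sqrt{k}\,M_W\equiv 0 \pmod{p}$, supplies a detail the paper leaves implicit, and you handle it correctly.
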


\subsection{Mutually unbiased regular Hadamard matrices} \label{MURH}

A Hadamard matrix of order $n$ for which the row sums and column sums are all the same, necessarily $\sqrt{n}$, is called \emph{regular}. 
It is clear that regular Hadamard matrices exist only in square orders. It is known that there is no pair of unbiased (row) regular Hadamard matrices of order $4n^2$, $n$ odd (see \cite{UCH}).
The following construction of a 3-class association scheme from mutually unbiased regular Hadamard matrices can be found in \cite{vanDam, MUH-BT}.

Let $\{ H_1,H_2, \ldots , H_m \}$ be a set of mutually unbiased regular Hadamard matrices of order $4n^2$, with $m \ge 2$. It follows that $n$ must be even. Further, let
$$M = \left [ I_{4n^2} \quad \frac{1}{2n}H_1 \quad \frac{1}{2n}H_2 \quad \ldots \quad \frac{1}{2n}H_m \right ]^{\top} 
\left [ I_{4n^2} \quad \frac{1}{2n}H_1^{\top} \quad \frac{1}{2n}H_2^{\top} \quad \ldots \quad \frac{1}{2n}H_m^{\top} \right ]$$
be the Gramian of the set of matrices $\{ I_{4n^2}, \frac{1}{2n}H_1, \frac{1}{2n}H_2, \ldots , \frac{1}{2n}H_m \}$. Define the matrix $B=2n(M-I_{4n^2})$. Then $B$ is a symmetric $(0,-1,1)$-matrix.
Let $B=B_1-B_2$, where $B_1$ and $B_2$ are  disjoint $(0,1)$-matrices and 
$J_{4n^2}$ is the $4n^2$ by $4n^2$ matrix of all 1 entries.
Then, $I_{4n^2(m+1)},B_1,B_2$ and $B_3=I_{m+1} \otimes J_{4n^2} - I_{4n^2(m+1)}$ form a 3-class association scheme. The intersection numbers can be seen in the following equalities:
%$$B^2_1= (2n^2+n)mI_{4n^2(m+1)}+(n^2+\frac{3}{2}n)(m-1)B_1+(n^2+\frac{1}{2}n)(m-1)B_2+(n^2+n)B_3,$$
%$$B_2^2= (2n^2-n)mI_{4n^2(m+1)}+(n^2-\frac{1}{2}n)(m-1)B_1+(n^2-\frac{3}{2}n)(m-1)B_2+(n^2-n)B_3,$$
%$$B_1B_2= (n^2-\frac{1}{2}n)(m-1)B_1+(n^2+\frac{1}{2}n)(m-1)B_2+n^2mB_3,$$
%$$B_1B_3= (2n^2+n-1)B_1+(2n^2+n)B_2,$$
%$$B_2B_3= (2n^2-n)B_1+(2n^2-n-1)B_2.$$

\begin{align*}
&B^2_1 = (2n^2+n)mI_{4n^2(m+1)}+(n^2+\frac{3}{2}n)(m-1)B_1+(n^2+\frac{1}{2}n)(m-1)B_2+(n^2+n)B_3, \\
&B_2^2 = (2n^2-n)mI_{4n^2(m+1)}+(n^2-\frac{1}{2}n)(m-1)B_1+(n^2-\frac{3}{2}n)(m-1)B_2+(n^2-n)B_3, \\
&B_1B_2 = (n^2-\frac{1}{2}n)(m-1)B_1+(n^2+\frac{1}{2}n)(m-1)B_2+n^2mB_3, \\
&B_1B_3 = (2n^2+n-1)B_1+(2n^2+n)B_2, \\
&B_2B_3 = (2n^2-n)B_1+(2n^2-n-1)B_2.
\end{align*}

The next theorem follows from Theorem \ref{lcd_subcode} and the intersection numbers of the association scheme formed by $I_{4n^2(m+1)},B_1,B_2$, $B_3$.

\begin{tm} \label{lcd_MURH}
Let $\{ H_1,H_2, \ldots , H_m \}$ be a set of mutually unbiased regular Hadamard matrices of order $4n^2$, with $m \ge 2$, and the $(0,1)$-matrices $B_1, B_2$ and $B_3$ be defined as above.
Further, let $\Pi$ be an equitable partition of the $3$-class association scheme formed by $B_0=I_{4n^2(m+1)},B_1,B_2$, $B_3$, with $t$ cells of the same length $\frac{4n^2}{t}$,
and let $M_i$ denote the corresponding quotient matrix of $B_i$ with respect to $\Pi$, $i=0,1,2,3$. Let $p$ be a prime number dividing $\frac{n}{2}$.
Then the set of row spaces of the matrices  
$N_x=\left[ \begin{array}{c|c}
X & \alpha_x I_t
\end{array} \right]$, $\alpha_x \in \mathbb F_{q} \setminus \{ 0 \}$,  
where $X$ is a nonzero element of the matrix algebra generated by the matrices $M_1$ and $M_2$, forms an LCD subspace code $C_S \subseteq \mathbb F_q^{2t}$, for some positive integer $r$ after $q = p^r$.
\end{tm}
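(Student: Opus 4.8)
The plan is to reduce everything to Theorem~\ref{lcd_subcode}, applied to the $3$-class association scheme with relations $B_0=I_{4n^2(m+1)},B_1,B_2,B_3$ and to the index set $I=\{1,2\}$, so that the matrix algebra under consideration is precisely the one generated by $M_1$ and $M_2$. Since the $B_i$ are symmetric and $\Pi$ has cells of equal size, the quotient matrices $M_i$ are symmetric as well, so the structural hypotheses of Theorem~\ref{lcd_subcode} are met. The only thing left to check is the divisibility condition $p\mid p_{i,j}^k$ for all $k\in\{0,1,2,3\}$ and all $i,j\in\{1,2\}$.

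First I would read off the relevant intersection numbers directly from the displayed structure-constant relations: the coefficients in the expansion of $B_1^2$ give $p_{1,1}^k$, those of $B_2^2$ give $p_{2,2}^k$, and those of $B_1B_2$ give $p_{1,2}^k=p_{2,1}^k$. This produces the intersection numbers $(2n^2\pm n)m$, $(n^2\pm\tfrac32 n)(m-1)$, $(n^2\pm\tfrac12 n)(m-1)$, $n^2\pm n$, $n^2m$ and $0$ that must be examined modulo $p$.

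The key observation is that the hypothesis $p\mid\frac{n}{2}$ is exactly adapted to these coefficients. Each intersection number factors either as $n\cdot(\text{integer})$ — for instance $(2n^2+n)m=n(2n+1)m$, $n^2+n=n(n+1)$, and $n^2m$ — or as $\frac{n}{2}\cdot(\text{integer})$ — for instance $n^2+\tfrac32 n=\tfrac{n}{2}(2n+3)$ and $n^2\pm\tfrac12 n=\tfrac{n}{2}(2n\pm1)$. Since $p\mid\frac{n}{2}$ implies $p\mid n$, every one of these numbers is divisible by $p$, while $p_{1,2}^0=0$ is trivially divisible. I would emphasize that the presence of the half-integer coefficients is exactly why the divisibility is demanded of $\frac{n}{2}$ rather than merely of $n$: if one only had $p=2$ dividing $n$ with $n\equiv 2\pmod 4$, the numbers carrying a genuine $\tfrac12 n$ or $\tfrac32 n$ term need not be even.

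Once the divisibility is confirmed for every pair $i,j\in\{1,2\}$ and every $k$, Theorem~\ref{lcd_subcode} applies verbatim and yields that the row spaces of the matrices $N_x=[\,X\mid\alpha_x I_t\,]$ form an LCD subspace code in $\mathbb F_q^{2t}$. I expect the only real obstacle to be bookkeeping: correctly matching each coefficient in the three product relations to the right $p_{i,j}^k$ and choosing the factorization that exposes a factor of $n$ or of $\frac{n}{2}$. There is no genuine analytic difficulty, since the heavy lifting — the passage from the vanishing of the structure constants modulo $p$ to the LCD property via the identity $N_xN_y^\top=\alpha_x\alpha_y I_t$ and Lemma~\ref{LCD-lemma} — is already packaged inside Theorem~\ref{lcd_subcode}.
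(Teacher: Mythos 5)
Your proposal is correct and follows exactly the paper's route: the paper proves this theorem by simply invoking Theorem~\ref{lcd_subcode} together with the displayed intersection numbers of the $3$-class scheme, which is precisely your reduction with $I=\{1,2\}$ and the factorizations $n(2n\pm 1)m$, $\tfrac{n}{2}(2n\pm 1)(m-1)$, $\tfrac{n}{2}(2n\pm 3)(m-1)$, $n(n\pm 1)$, $n^2m$ showing $p\mid p_{i,j}^k$ when $p\mid\tfrac{n}{2}$. Your added remark explaining why divisibility by $\tfrac{n}{2}$ (rather than $n$) is needed, relevant only for $p=2$ with $n\equiv 2\pmod 4$, is a correct refinement the paper leaves implicit.
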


\subsection{Mutually unbiased Bush-type Hadamard matrices}

In \cite{MUH-BT}, the authors constructed association schemes of five and eight classes from mutually unbiased Bush-type Hadamard matrices. We will show that these association schemes can be used
for a construction of LCD subspace codes.

A \emph{Bush-type Hadamard matrix} of order $4n^2$ is a block matrix $H= [H_{ij}]$ with block size $2n$, $H_{ii}=J_{2n}$ and $H_{ij}J_{2n}=J_{2n}H_{ij}=0$, $i \neq j$, $1 \le i \le 2n$, $1 \le j \le 2n$,
where $J_{2n}$ is the $2n$ by $2n$ matrix of all 1 entries. Obviously, Bush-type Hadamard matrices are regular.

For odd values of $n$ there is no pair of unbiased Bush-type Hadamard matrices of order $4n^2$ (see \cite{UCH}), but there are unbiased Bush-type Hadamard matrices of order $4n^2$ for $n$ even 
(see \cite{MUH-CM}). The upper bound of the number of mutually unbiased Bush-type Hadamard matrices of order $4n^2$ is $2n-1$ (see \cite{MUH-BT}), whereas the upper bound of mutually unbiased Hadamard matrices of order $4n^2$ is $2n^2$ (see \cite{MUH-CM}). 

Let $\{ H_1,H_2, \ldots , H_m \}$ be a set of mutually unbiased Bush-type Hadamard matrices of order $4n^2$, $m \ge 2$. Since $m \ge 2$, $n$ must be even.
Let the matrices $B_1$ and $B_2$ be defined as in Section \ref{MURH}.
It is shown in \cite{MUH-BT} that the matrices
%$$A_0=I_{4n^2(m+1)},$$
%$$A_1=I_{m+1} \otimes I_{2n} \otimes (J_{2n}-I_{2n}),$$
%$$A_2=I_{m+1} \otimes (J_{2n}-I_{2n}) \otimes J_{2n},$$
%$$A_3= (J_{m+1}-I_{m+1}) \otimes I_{2n} \otimes J_{2n},$$
%$$A_4=B_1-A_3,$$
%$$A_5=B_2,$$

\begin{align*}
A_0 &=I_{4n^2(m+1)},\\
A_1&=I_{m+1} \otimes I_{2n} \otimes (J_{2n}-I_{2n}),\\
A_2&=I_{m+1} \otimes (J_{2n}-I_{2n}) \otimes J_{2n},\\
A_3&= (J_{m+1}-I_{m+1}) \otimes I_{2n} \otimes J_{2n},\\
A_4&=B_1-A_3,\\
A_5&=B_2,
\end{align*}

form a 5-class association scheme, and that

\begin{align*}
A_1^2&=(2n-1)A_0+(2n-2)A_1,\\
A_1A_2&= (2n-1)A_2,\\
A_1A_3&= (2n-1)A_3,\\
A_1A_4&= (n-1)A_4+nA_5,\\
A_1A_5&=nA_4+(n-1)A_5,\\
A_2^2&=2n(2n-1)A_0+2n(2n-1)A_1+2n(2n-2)A_2,\\
A_2A_3&=2n(A_4+A_5),\\
A_2A_4&=A_2A_5= (2n-1)nA_3+(2n-2)n(A_4+A_5),\\
A_3^2&=2mn(A_0+A_1)+2n(m-1)A_3,\\
A_3A_4&=A_3A_5=mnA_2+(m-1)n(A_4+A_5),\\
A_4^2&=A_5^2= (2n^2-n)mA_0+(n^2-n)m(A_1+A_2)+(n^2-\frac{n}{2})(m-1)(A_3+A_4)\\
&+(n^2-\frac{3n}{2})(m-1)A_5, \\
A_4A_5&=n^2mA_1+m(n^2-n)A_2+(n^2-\frac{n}{2})(m-1)A_3\\
&+(n^2-\frac{3n}{2})(m-1)A_4+(n^2-\frac{n}{2})(m-1)A_5.
\end{align*}

The following theorem holds, which can be verified by checking the corresponding intersection numbers of the 5-class association scheme given above. 

\begin{tm} \label{lcd_MUBH-5}
Let $\{ H_1,H_2, \ldots , H_m \}$ be a set of mutually unbiased Bush-type Hadamard matrices of order $4n^2$, with $m \ge 2$, and let $A_0,A_1,A_2,A_3,A_4,A_5$ denote the matrices defined above.
Further, let $\Pi$ be an equitable partition of the $5$-class association scheme formed by $A_0,A_1,A_2,A_3,A_4,A_5$, with $t$ cells of the same length $\frac{4n^2}{t}$,
and let $M_i$ denote the corresponding quotient matrix of $A_i$ with respect to $\Pi$, $i=0,1,2,3,4,5$. Let $p$ be a prime number dividing $\frac{n}{2}$.
Then the set of row spaces of the matrices  
$N_x=\left[ \begin{array}{c|c}
X & \alpha_x I_t
\end{array} \right]$, $\alpha_x \in \mathbb F_{q} \setminus \{ 0 \}$,  
where $X$ is a nonzero element of the matrix algebra generated by the matrices $M_2,M_3,M_4$ and $M_5$, forms an LCD subspace code $C_S \subseteq \mathbb F_q^{2t}$, for some positive integer $r$ after $q = p^r$.
\end{tm}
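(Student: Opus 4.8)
The plan is to recognize this statement as a direct instance of the general construction behind Theorem \ref{lcd_subcode}, now applied to the 5-class association scheme $A_0,\dots,A_5$ with the index set $I=\{2,3,4,5\}$. Concretely, I would form the matrix algebra generated over $\mathbb F_q$ by $M_2,M_3,M_4,M_5$ and show that, in characteristic $p$, all products of the generators vanish; the LCD property of the resulting subspace code then follows exactly as in the paragraph preceding Theorem \ref{lcd_subcode}, via Lemma \ref{LCD-lemma}.

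First I would transfer the multiplication relations from the matrices $A_i$ to the quotient matrices $M_i$ using Theorem \ref{th_struc_const}: since $M_iM_j=\sum_k p_{ij}^k M_k$ with the same intersection numbers $p_{ij}^k$, every relation displayed for $A_iA_j$ holds verbatim for $M_iM_j$. Because $\Pi$ is equitable with cells of equal size and each $A_i$ is symmetric, each $M_i$ is symmetric, so $M_j^\top=M_j$ throughout.

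The heart of the argument is the divisibility check: for all $i,j\in\{2,3,4,5\}$ and all $k$ one has $p\mid p_{ij}^k$. I would read this off the displayed products $A_2^2,\,A_2A_3,\,A_2A_4=A_2A_5,\,A_3^2,\,A_3A_4=A_3A_5,\,A_4^2=A_5^2,\,A_4A_5$: every coefficient occurring there either carries an explicit factor $n$ or $2n$, or is one of the borderline coefficients $n^2-\tfrac{n}{2}=\tfrac{n}{2}(2n-1)$ and $n^2-\tfrac{3n}{2}=\tfrac{n}{2}(2n-3)$. Since $p\mid\tfrac{n}{2}$ forces $p\mid n$ as well, all of these are $\equiv 0\pmod p$. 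This is exactly where the hypothesis $p\mid\tfrac{n}{2}$ (rather than merely $p\mid n$) is essential: the factors $2n-1$ and $2n-3$ are odd and coprime to $p$, so only the $\tfrac{n}{2}$ can supply the prime. I expect this verification to be the main, though routine, obstacle, and also the natural place to remark why $A_0$ and $A_1$ are excluded from $I$: the relation $A_1^2=(2n-1)A_0+(2n-2)A_1$ has the coefficient $2n-1\equiv-1\pmod p$, so adjoining $M_0$ or $M_1$ would spoil the vanishing.

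With the divisibility in hand, over $\mathbb F_q$ of characteristic $p$ we obtain $M_iM_j=0$ for all $i,j\in I$, so the generated algebra is just the $\mathbb F_q$-span of $M_2,M_3,M_4,M_5$ with trivial multiplication. Hence for any nonzero $X,Y$ in this algebra, $XY^\top=XY=0$, whence
$$N_xN_y^\top=\left[\begin{array}{c|c}X&\alpha_xI_t\end{array}\right]\left[\begin{array}{c|c}Y&\alpha_yI_t\end{array}\right]^\top=XY^\top+\alpha_x\alpha_yI_t=\alpha_x\alpha_yI_t,$$
which is nonsingular. Lemma \ref{LCD-lemma} then gives that each codeword of $C_S$ intersects the dual of each codeword trivially, so $C_S\subseteq\mathbb F_q^{2t}$ is an LCD subspace code, completing the proof.
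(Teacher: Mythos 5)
Your proposal is correct and takes essentially the same route as the paper: the paper's proof consists precisely of the observation that the displayed intersection numbers for $A_iA_j$, $i,j\in\{2,3,4,5\}$, are all divisible by any prime $p$ dividing $\tfrac{n}{2}$ (the critical coefficients being $n^2-\tfrac{n}{2}=\tfrac{n}{2}(2n-1)$ and $n^2-\tfrac{3n}{2}=\tfrac{n}{2}(2n-3)$), after which Theorem \ref{lcd_subcode} applies verbatim, exactly as you argue. Your only slip is a harmless one in a side remark: $2n-3$ need not be coprime to $p$ (it is divisible by $p$ when $p=3$ divides $n$), but this does not affect the sufficiency argument, since $p\mid\tfrac{n}{2}$ alone makes every coefficient vanish modulo $p$.
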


In \cite{MUH-BT}, the authors also gave a construction of an 8-class association scheme form mutually unbiased Bush-type Hadamard matrices. This association scheme is formed by the following matrices:
\begin{displaymath}  
\tilde{A_0}=   \left[
\begin{tabular}{cc}
 $A_0$ & $0$   \\
 $0$   & $A_0$ \\
\end{tabular}                 \right], \quad
\tilde{A_1}=   \left[
\begin{tabular}{cc}
$A_1$  & $0$   \\
 $0$   & $A_1$ \\
\end{tabular}                 \right], \quad
\tilde{A_2}=   \left[
\begin{tabular}{cc}
 $0$   & $A_1$ \\
 $A_1$ & $0$   \\
\end{tabular}                 \right], \quad
\tilde{A_3}=   \left[
\begin{tabular}{cc}
$A_2$  & $A_2$ \\
$A_2$  & $A_2$ \\
\end{tabular}                 \right],
\end{displaymath}
\begin{displaymath}  
\tilde{A_4}=   \left[
\begin{tabular}{cc}
 $A_3$ & $0$   \\
 $0$   & $A_3$ \\
\end{tabular}                 \right], \quad
\tilde{A_5}=   \left[
\begin{tabular}{cc}
 $0$   & $A_3$ \\
 $A_3$ & $0$   \\
\end{tabular}                 \right], \quad
\tilde{A_6}=   \left[
\begin{tabular}{cc}
$A_4$  & $A_5$ \\
$A_5$  & $A_4$ \\
\end{tabular}                 \right], \quad
\tilde{A_7}=   \left[
\begin{tabular}{cc}
$A_5$  & $A_4$ \\
$A_4$  & $A_5$ \\
\end{tabular}                 \right],
\end{displaymath}
$  \ 
\tilde{A_8}=   \left[
\begin{tabular}{cc}
 $0$   & $A_0$ \\
 $A_0$ & $0$   \\
\end{tabular}                 \right], 
\text{where} \ A_0,A_1,A_2,A_3,A_4,A_5 \ \text{denote the matrices defined above.}$ \\
%where $A_0,A_1,A_2,A_3,A_4,A_5$ denote the matrices defined above.

\begin{tm} \label{lcd_MUBH-8}
Let $\{ H_1,H_2, \ldots , H_m \}$ be a set of mutually unbiased Bush-type Hadamard matrices of order $4n^2$, with $m \ge 2$, and let $\tilde{A_0}, \tilde{A_1}, \ldots ,\tilde{A_8}$ 
denote the matrices defined above. Further, let $\Pi$ be an equitable partition of the $8$-class association scheme formed by $\tilde{A_0}, \tilde{A_1}, \ldots ,\tilde{A_8}$, with $t$ cells of the same length $\frac{4n^2}{t}$, and let $M_i$ denote the corresponding quotient matrix of $A_i$ with respect to $\Pi$, $i=0,1, \ldots ,8$. Let $p$ be a prime number dividing $n$.
Then the set of row spaces of the matrices  
$N_x=\left[ \begin{array}{c|c}
X & \alpha_x I_t
\end{array} \right]$, $\alpha_x \in \mathbb F_{q} \setminus \{ 0 \}$,  
where $X$ is a nonzero element of the matrix algebra generated by the matrices $M_3, M_4, M_5, M_6$ and $M_7$ forms an LCD subspace code $C_S \subseteq \mathbb F_q^{2t}$, for some positive integer $r$ after $q = p^r$.
\end{tm}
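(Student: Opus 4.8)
The plan is to reduce the whole statement to the divisibility hypothesis of Theorem \ref{lcd_subcode}. Since $\tilde A_0, \tilde A_1, \ldots, \tilde A_8$ form an $8$-class association scheme and $\Pi$ has cells of equal size, the quotient matrices $M_0,\ldots,M_8$ are symmetric and, by Theorem \ref{th_struc_const}, obey the same multiplication rule $M_iM_j=\sum_k \tilde p_{ij}^k M_k$ as the adjacency matrices, where $\tilde p_{ij}^k$ denote the intersection numbers of the $8$-class scheme. Hence it suffices to show that $p \mid \tilde p_{ij}^k$ for every $k$ and all $i,j\in I=\{3,4,5,6,7\}$: once this holds, $M_iM_j=0$ over $\mathbb F_q$ for all $i,j\in I$, so every product of two generators of the algebra generated by $M_3,\dots,M_7$ vanishes, whence $XY^\top=0$ for $X,Y$ in that algebra. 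Then $N_xN_y^\top=\alpha_x\alpha_y I_t$ is nonsingular, and Lemma \ref{LCD-lemma} yields the LCD subspace code exactly as in the proof of Theorem \ref{lcd_subcode}.

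To establish the divisibility I would exploit the $2\times 2$ block (tensor) description of the generators. Writing $P=\left[\begin{smallmatrix}0&1\\1&0\end{smallmatrix}\right]$ and $J_2=I_2+P$, the relevant matrices read $\tilde A_3=J_2\otimes A_2$, $\tilde A_4=I_2\otimes A_3$, $\tilde A_5=P\otimes A_3$, $\tilde A_6=I_2\otimes A_4+P\otimes A_5$ and $\tilde A_7=I_2\otimes A_5+P\otimes A_4$. Using $(U\otimes V)(U'\otimes V')=UU'\otimes VV'$ together with $I_2^2=P^2=I_2$, $I_2P=PI_2=P$, $J_2P=J_2$ and $J_2^2=2J_2$, each product $\tilde A_i\tilde A_j$ with $i,j\in I$ collapses to a combination of terms $U\otimes(A_aA_b)$, which I then expand via the $5$-class products $A_aA_b$ listed above.

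The decisive point is that the block structure forces the symmetric combinations $A_4^2+A_5^2=2A_4^2$ and $A_4A_5+A_5A_4=2A_4A_5$ to appear, using the commutativity of the scheme and the coincidences $A_4^2=A_5^2$, $A_3A_4=A_3A_5$, $A_2A_4=A_2A_5$. This built-in factor of $2$ turns the coefficients $n^2-\tfrac n2=\tfrac12 n(2n-1)$ and $n^2-\tfrac{3n}2=\tfrac12 n(2n-3)$ occurring in $A_4^2$ and $A_4A_5$ into $n(2n-1)$ and $n(2n-3)$, which are plainly divisible by $n$; every remaining coefficient, such as $2n(2n-1)m$, $2n(n-1)m$ and $2n^2m$, already carries a factor $n$. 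Consequently each $\tilde p_{ij}^k$ with $i,j\in I$ is a multiple of $n$, so $p\mid n$ makes all of them divisible by $p$. This is precisely why the hypothesis is $p\mid n$, weaker than the $p\mid \tfrac n2$ of Theorem \ref{lcd_MUBH-5}.

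The main obstacle is bookkeeping rather than conceptual. Because $\tilde A_3$, $\tilde A_6$ and $\tilde A_7$ are each sums of two tensor components, after expanding $\tilde A_i\tilde A_j$ one must verify that the $I_2\otimes(\cdot)$ and $P\otimes(\cdot)$ parts recombine with equal coefficients into genuine scheme matrices $\tilde A_k$: for instance that the $I_2\otimes A_2$ and $P\otimes A_2$ coefficients agree so as to assemble $\tilde A_3$, and that the $I_2\otimes A_4$ and $P\otimes A_5$ coefficients agree so as to assemble $\tilde A_6$. These pairings are guaranteed by the symmetries $A_4^2=A_5^2$ and $A_4A_5=A_5A_4$, and checking them is the one place where care is required; once confirmed, divisibility by $n$ is immediate and the theorem follows from Theorem \ref{lcd_subcode}.
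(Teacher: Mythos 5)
Your proposal is correct and follows essentially the same route as the paper: both reduce the statement to the divisibility hypothesis of Theorem \ref{lcd_subcode} by computing the products $\tilde{A_i}\tilde{A_j}$ for $i,j\in\{3,4,5,6,7\}$ in block form, expanding them via the listed $5$-class relations (where the coincidences $A_4^2=A_5^2$ and $A_4A_5=A_5A_4$ double the coefficients $n^2-\tfrac{n}{2}$ and $n^2-\tfrac{3n}{2}$ into multiples of $n$), and then invoking Theorem \ref{lcd_subcode} together with Lemma \ref{LCD-lemma}. Your tensor-product bookkeeping with $I_2$, $P$, $J_2$ is just a cleaner notation for the paper's explicit $2\times 2$ block computations; the underlying argument, including the observation explaining why $p\mid n$ suffices here versus $p\mid\tfrac{n}{2}$ in Theorem \ref{lcd_MUBH-5}, is the same.
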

\begin{proof}
It holds that
\begin{displaymath}  
\tilde{A_3}^2=  2 \left[
\begin{tabular}{cc}
 $A_2^2$ & $A_2^2$   \\
 $A_2^2$ & $A_2^2$   \\
\end{tabular}                 \right], \quad
\tilde{A_3} \tilde{A_4}= \tilde{A_3} \tilde{A_5}=  \left[
\begin{tabular}{cc}
$A_2 A_3$  & $A_2 A_3$ \\
$A_2 A_3$  & $A_2 A_3$ \\
\end{tabular}                 \right], 
\end{displaymath}
\begin{displaymath}  
\tilde{A_3} \tilde{A_6}= \tilde{A_3} \tilde{A_7}=  \left[
\begin{tabular}{cc}
 $A_2(A_4+A_5)$ & $A_2(A_4+A_5)$   \\
 $A_2(A_4+A_5)$ & $A_2(A_4+A_5)$   \\
\end{tabular}                 \right], \quad
\tilde{A_4}^2= \tilde{A_5}^2= \left[
\begin{tabular}{cc}
 $A_3^2$ & $0$     \\
 $0$     & $A_3^2$ \\
\end{tabular}                    \right], 
\end{displaymath}
\begin{displaymath}  
\tilde{A_4} \tilde{A_5}=  \left[
\begin{tabular}{cc}
$0$     & $A_3^2$ \\
$A_3^2$ & $0$     \\
\end{tabular}            \right], \quad
\tilde{A_4} \tilde{A_6}= \tilde{A_5} \tilde{A_7}= \left[
\begin{tabular}{cc}
$A_3 A_4$  & $A_3 A_5$ \\
$A_3 A_5$  & $A_3 A_4$ \\
\end{tabular}                         \right], 
\end{displaymath}
\begin{displaymath}  
\tilde{A_4} \tilde{A_7}= \tilde{A_5} \tilde{A_6}= \left[
\begin{tabular}{cc}
$A_3 A_5$  & $A_3 A_4$ \\
$A_3 A_4$  & $A_3 A_5$ \\
\end{tabular}                  \right], \quad
\tilde{A_6}^2= \tilde{A_7}^2= \left[
\begin{tabular}{cc}
$A_4^2+A_5^2$ & $2A_4A_5$     \\
$2A_4A_5$     & $A_4^2+A_5^2$ \\
\end{tabular}                      \right], 
\end{displaymath}
\begin{displaymath}  
\tilde{A_6} \tilde{A_7}=  \left[
\begin{tabular}{cc}
$2A_4A_5$     & $A_4^2+A_5^2$ \\
$A_4^2+A_5^2$ & $2A_4A_5$     \\
\end{tabular}                 \right]. 
\end{displaymath}

Hence,
\begin{align*}
& \tilde{A^2_3} = 4n((2n-1)\tilde{A_0}+(2n-1)\tilde{A_1}+(2n-1)\tilde{A_2}+(2n-2)\tilde{A_3}+(2n-1)\tilde{A_8}), \\
& \tilde{A_3} \tilde{A_4} = \tilde{A_3} \tilde{A_5}= 2n(\tilde{A_6} + \tilde{A_7}), \\
& \tilde{A_3} \tilde{A_6} = \tilde{A_3} \tilde{A_7}= 2n(2n-1)(\tilde{A_4} + \tilde{A_5})+2n(2n-2)(\tilde{A_6} + \tilde{A_7}), \\
& \tilde{A_4}^2= \tilde{A_5}^2 = 2mn(\tilde{A_0} + \tilde{A_1}) + 2n(m-1) \tilde{A_4}, \\
& \tilde{A_4} \tilde{A_5} = 2mn(\tilde{A_2} + \tilde{A_8}) + 2n(m-1) \tilde{A_5}, \\
& \tilde{A_4} \tilde{A_6}= \tilde{A_5} \tilde{A_7}= \tilde{A_4} \tilde{A_7}= \tilde{A_5} \tilde{A_6}= mn \tilde{A_3} + (m-1)n (\tilde{A_6} + \tilde{A_7}), \\ 
& \tilde{A_6}^2= \tilde{A_7}^2= 2(2n^2-n)m \tilde{A_0} + 2(n^2-n)m (\tilde{A_1} + \tilde{A_3}) + 2n^2m \tilde{A_2} \\
& \qquad  + (2n^2-n)(m-1)(\tilde{A_4} + \tilde{A_5}) + (2n^2-n)(m-1) \tilde{A_6} + (2n^2-3n)(m-1) \tilde{A_7}, \\
& \tilde{A_6} \tilde{A_7}= 2(2n^2-n)m \tilde{A_8} + 2(n^2-n)m (\tilde{A_2} + \tilde{A_3}) + 2n^2m \tilde{A_1} \\
& \qquad  + (2n^2-n)(m-1)(\tilde{A_4} + \tilde{A_5}) + (2n^2-n)(m-1) \tilde{A_7} + (2n^2-3n)(m-1) \tilde{A_6}.
\end{align*}

Theorem \ref{lcd_subcode} completes the proof.
\end{proof}

%
%\subsubsection{Examples of self-orthogonal subspace codes}
%
%\section{Conclusion} \label{con}
%
%
%In this paper we gave a method of constructing self-orthogonal codes from equitable partitions of association schemes and, as an illustration of the method, 
%constructed self-orthogonal codes from some distance-regular graphs.
%Some of the obtained codes are optimal.
%We also introduced self-orthogonal subspace codes and showed that under some conditions equitable partitions of association schemes yield such subspace codes.
%We gave examples of self-orthogonal subspace codes obtained from distance-regular graph. We believe that using the described methods one can obtain further interesting self-orthogonal codes
%and self-orthogonal subspace codes using association schemes, especially those arising from distance-regular graphs. 

\section{Statements and Declarations}

\subsection{Funding}
This work has been fully supported by {\rm C}roatian Science Foundation under the project 5713.

\subsection{Availability of data and material}
Not applicable.

%\noindent {\bf Acknowledgement} \\
%This work has been supported in part by {\rm C}roatian Science Foundation under the project 5713.


\begin{thebibliography}{30}

\bibitem{multi}
A.~Alahmadi, A.~Altassan, A.~AlKenani, S.~\c{C}alkavur, H.~Shoaib, P.~Sol\'e, A Multisecret-Sharing Scheme Based on LCD Codes, Mathematics 2020 8 (2), 272.

\bibitem{araya-har}
M. Araya, M. Harada, K. Saito, On the minimum weights of binary LCD codes and ternary LCD codes, Finite Fields Appl. 76 (2021), paper no. 101925, 22 pp.

\bibitem{daniele}
D. Bartoli, A.-E. Riet, L. Storme, P. Vandendriessche, Improvement to the sunflower bound for a class of equidistant constant dimension subspace codes. J. Geom. 112 (2021), no. 1, paper no. 12, 9 pp.

\bibitem{UCH}
D.  Best,  H.  Kharaghani,  Unbiased  complex  Hadamard matrices  and  bases, Cryptogr. Commun. 2 (2010), 199--209.

%\bibitem{MUW}
%D. Best, H. Kharaghani, H. Ramp, Mutually unbiased weighing matrices. Des. Codes Cryptogr. 76 (2015), 237--256.

\bibitem{magma}
W.~Bosma, J.~Cannon, Handbook of Magma Functions, Department of Mathematics, University of Sydney, 1994.
http://magma.maths.usyd.edu.au/magma.

\bibitem{stefka}
S. Bouyuklieva, Optimal binary LCD codes, Des. Codes Cryptogr. 89 (2021), 2445--2461.

\bibitem{BCN}
A.~E.~Brouwer,~A.~M.~Cohen,~A.~Neumaier, Distance-Regular Graphs, Springer-Verlag, Berlin, 1989.

\bibitem{qf-GF(2)}
P. J. Cameron, J. J. Seidel, Quadratic forms over GF(2), Nederl. Akad. Wetensch. Proc. Ser. A 76=Indag. Math 35 (1973), 1--8.

%\bibitem{SRG-codes}
%D.~Crnkovi\'c, M.~Maksimovi\'c, B.~G.~Rodrigues, S.~Rukavina, Self-orthogonal codes from the strongly regular graphs on up to 40 vertices, Adv. Math. Commun. 10 (2016) 555--582.

%
%\bibitem{codetables}
%M.~Grassl, Bounds on the minimum distance of linear codes and quantum codes, (http://www.codetables.de.)

\bibitem{carlet-guilley}
C. Carlet, S. Guilley, Complementary dual codes for counter-measures to side-channel attacks, Adv. Math. Commun. 10 (2016), 131--150.

\bibitem{carlet-et-al}
C.~Carlet, S.~Mesnager, C.~Tang, Y.~Qi, Linear codes over $F_q$ which are equivalent to LCD codes for $q>3$,
IEEE Trans. Inform. Theory 64 (2018), 3010--3017.

\bibitem{dra-lcd}
D. Crnkovi\' c, R. Egan, B. G. Rodrigues, A. \v Svob, LCD codes from weighing matrices, Appl. Algebra Engrg. Comm. Comput. 32 (2021), 175--189. 

\bibitem{orb-Had}
D. Crnkovi\'c, R. Egan, A. \v Svob, Orbit matrices of Hadamard matrices and related codes, Discrete Math. 341 (2018), 1199--1209.

\bibitem{dsa-as}
D. Crnkovi\'c, S. Rukavina, A. \v Svob, Self-orthogonal codes from equitable partitions of association schemes, J. Algebraic Combin. 55 (2022), 157--171.

\bibitem{vanDam}
E. van Dam, Three-class association schemes, J. Algebraic. Combin. 10 (1999), 69--107.

\bibitem{network-book}
M.~Greferath, M.~O.~Pav\v cevi\'c, N.~Silberstein, M.~V\'azquez-Castro (eds.),
Network coding and subspace designs, Signals and Communication Technology, Springer, Cham, 2018.

%\bibitem{Hanaki}
%A.~Hanaki, Elementary functions for association schemes on GAP, October 2013. (http://math.shinshu-u.ac.jp/$\sim$hanaki/as/gap/association\_scheme.pdf)

%\bibitem{har_ton} 
%M.~Harada, V.~D.~Tonchev, Self-orthogonal codes from symmetric designs with fixed-point-free automorphisms, 
%Discrete Math. 264 (2003) 81--90.

\bibitem{table_sc}
D.~Heinlein, M.~Kiermaier, S.~Kurz, A.~Wassermann, Tables of subspace codes, arXiv: 1601.02864v2, 2017.

\bibitem{heinlein-dcc}
D.~Heinlein, T.~Honold, M.~Kiermaier, S.~Kurz, A.~Wassermann, Classifying optimal binary subspace codes of length 8, constant dimension 4 and minimum distance 6, Des. Codes Cryptogr. 87 (2019) 375--391.

\bibitem{MUH-CM}
W. H. Holzmann, H. Kharaghani, W. Orrick, On the real unbiased Hadamard matrices, in: R. A. Brualdi, S. Hedayat, H. Kharaghani,  G. B. Khosrovshahi,  S. Shahriari (eds),  
Combinatorics and graphs, Contemp. Math. 531, Amer. Math. Soc., Providence, RI, 2010, 243--250.

\bibitem{FEC} 
W.~C.~Huffman, V.~Pless, Fundamentals of Error-Correcting Codes, Cambridge University Press, 2003.

\bibitem{MUH-NATO}
H. Kharaghani, Unbiased Hadamard matrices and bases, in: D. Crnkovi\'c, V. Tonchev (eds.), Information security, coding theory and related combinatorics, NATO Sci. Peace Secur. Ser. D Inf. Commun. Secur. 29, IOS, Amsterdam, 2011, 312--325.

\bibitem{MUH-BT}
H. Kharaghani, S. Sasani, S. Suda,  Mutually unbiased bush-type Hadamard matrices and association schemes, Electron. J. Combin. 22, No. 3 (2015), Research Paper P3.10, 11 p. 

\bibitem{Hadi-428}
H.~Kharaghani, B.~Tayfeh-Rezaie, A Hadamard matrix of order 428, J. Combin. Des. 13 (2005), 435--440. 

\bibitem{network-coding}
R.~K\"{o}tter, F.~Kschischang, Coding for errors and erasures in random network coding, IEEE Trans. Inform. Theory 54 (2008) 3579--3591.

\bibitem{leo-constant}
L. Hernandez Lucas, I. Landjev, L. Storme, P. Vandendriessche, A stability result and a spectrum result on constant dimension codes, Linear Algebra Appl. 621 (2021), 193--213.

\bibitem{massey}
J.~L.~Massey, Linear codes with complementary duals, Discrete Math. 106/107 (1992), 337--342.

\bibitem{sendrier}
N.~Sendrier, Linear codes with complementary duals meet the Gilbert-Varshamov bound, Discrete Math. 304 (2004), 345--347.

%\bibitem{RainsSloane}
%G.~Nebe, E.~M.~Rains, N.~J.~A.~Sloane, Self-Dual Codes and Invariant Theory,  Algorithms and Computation in Mathematics, Vol. 17, Springer-Verlag 2006.

\bibitem{as}
A. \v Svob, LCD codes from equitable partitions of association schemes, Appl. Algebra Engrg. Comm. Comput., doi: https://doi.org/10.1007/s00200-021-00532-5, to appear. 

\bibitem{GAP2016}
The GAP Group, GAP -- Groups: Algorithms, and Programming, Version 4.8.4; 2016. (http://www.gap-system.org) 

\bibitem{zhang}
H. Zhang, X. Cao, Further constructions of cyclic subspace codes, Cryptogr. Commun. 13 (2021), 245-262.

\end{thebibliography}
\end{document}